\newcounter{ENUM}
\newcommand{\beas}{\begin{eqnarray*}}
\newcommand{\eeas}{\end{eqnarray*}}
\newcommand{\itm}{\item}
\newenvironment{ilist}{\renewcommand{\theENUM}{\roman{ENUM}}\renewcommand{\itm}{\addtocounter{ENUM}{1}\item[(\theENUM)]}\begin{itemize}\setcounter{ENUM}{0}}{\end{itemize}}
\newenvironment{alist}[1][0]{\renewcommand{\theENUM}{\alph{ENUM}}\renewcommand{\itm}{\addtocounter{ENUM}{1}\item[\theENUM)]}\begin{itemize}\setcounter{ENUM}{#1}}{\end{itemize}}
\newtheorem{thm}{Theorem}[section]
\newtheorem{prop}[thm]{Proposition}
\newtheorem{lem}[thm]{Lemma}
\newtheorem{cor}[thm]{Corollary}
\newtheorem{conj}[thm]{Conjecture}
\theoremstyle{definition}
\newtheorem{defn}[thm]{Definition}
\newtheorem{setup}[thm]{Setup}
\newtheorem{ex}[thm]{Example}
\theoremstyle{remark}
\newtheorem{notn}[thm]{Notation}
\newtheorem{rem}[thm]{Remark}
\numberwithin{equation}{section}
\def\Z{{\mathbb Z}}
\def\R{{\mathbb R}}
\def\fS{{\mathfrak S}}
\def\ds{\displaystyle}
\newcommand{\bm}[1]{{\boldsymbol{#1}}}
\def\0{\bm{0}}
\def\c{\bm{c}}
\def\e{\bm{e}}
\def\m{\bm{m}}
\def\p{\bm{p}}
\def\v{\bm{v}}
\def\u{\bm{u}}
\def\x{\bm{x}}
\def\y{\bm{y}}
\def\bbeta{\bm{\beta}}
\def\cC{\mathcal C}
\def\cM{\mathcal M}
\def\cS{\mathcal S}
\def\O{\mathcal O}
\def\cP{\mathcal P}
\newcommand{\conv}{\mathrm{conv}}
\newcommand{\Lat}{\mathrm{Lat}}
\newcommand{\vol}{\mathrm{Vol}}
\newcommand{\nvol}{\mathrm{nvol}}
\newcommand{\mv}{\mathrm{\mathcal{M}Vol}}
\newcommand{\ml}{\mathrm{\mathcal{M}Lat}}
\newcommand{\mphi}{\mathcal{M}\phi}
\def\x{\mathbf{x}}
\def\ncone{\operatorname{ncone}}
\def\conv{\operatorname{conv}}
\def\cone{\operatorname{Cone}}
\def\Perm{\operatorname{Perm}}
\def\aff{\operatorname{aff}}
\def\lin{\operatorname{lin}}
\newcommand{\B}{{\mathfrak B}}
\author{Federico Castillo}
\address{Federico Castillo, Department of Mathematics, University of
  California, Davis, One Shields Avenue, Davis, CA 95616 USA.}
  \email{fcastillo@math.ucdavis.edu}
  \author{Fu Liu}
\thanks{Fu Liu is partially supported by NSF grant DMS-1265702 and a grant from from the Simons Foundation \#426756.} \address{Fu Liu, Department of Mathematics, University of California, Davis, One Shields Avenue, Davis, CA 95616 USA.}
\email{fuliu@math.ucdavis.edu}
\keywords{Ehrhart polynomials, generalized permutohedra, mixed valuations, Berline-Vergne construction}
\begin{document}
\title{Berline-Vergne valuation and generalized permutohedra}

\maketitle
\begin{abstract}
Generalizing a conjecture by De Loera et al., we conjecture that integral generalized permutohedra all have positive Ehrhart coefficients. Berline and Vergne construct a valuation that assigns values to faces of polytopes, which provides a way to write Ehrhart coefficients of a polytope as positive sums of these values. Based on available results, we pose a stronger conjecture: Berline-Vergne's valuation is always positive on permutohedra, which implies our first conjecture. 

This article proves that our strong conjecture on Berline-Vergne's valuation is true for dimension up to $6$, and is true if we restrict to faces of codimension up to $3.$ 
In addition to investigating the positivity conjectures, we study the Berline-Vergne's valuation, and show that it is the unique construction for McMullen's formula used to describe number of lattice points in permutohedra under certain symmetry constraints.
 We also give an equivalent statement to the strong conjecture in terms of mixed valuations. 

\end{abstract}

\section{Introduction}
\label{sec:in}

A \emph{lattice point} is a point in $\Z^D.$ Given any bounded set $S \subseteq \R^D,$ we let $\Lat(S) := |S \cap \Z^D|$ be the number of lattice points in $S.$
Given a polytope $P$ in $\R^D,$ a natural enumerative problem is to compute $\Lat(P)$. 
In this paper, we will focus on \emph{integral polytopes}, i.e., polytopes whose vertices are all lattice points, and generalized permutohedra -- a special family of polytopes.

\subsection{Motivation: Ehrhart positivity for generalized permutohedra}
One approach to study the question of computing $\Lat(P)$ for an integral polytope $P$ is to consider a more general counting problem: For any nonnegative integer $t,$ let $tP := \{ t \x \ | \ \x \in P\}$ be the \emph{$t$-th dilation of $P$}, and then consider the function 
\[
i(P, t) := \Lat(tP)
\]
that counts the number of lattice points in $tP.$
It is a classic result that $i(P,t)$ is a polynomial in $t$. More precisely:
\begin{thm}[Ehrhart \cite{ehrhart}]\label{thm:ehrhart0}
There exists a polynomial $f(x)$ such that $f(t) = i(P,t)$ for any $t \in \mathbb{Z}_{\geq 0}$. 
Moreover, the degree of $f(x)$ is equal to the dimension of $P$.
\end{thm}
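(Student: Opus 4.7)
The plan is to reduce the theorem to the case of a single lattice simplex via a triangulation, and then handle that case by a generating-function calculation on the cone over the simplex. The key tool throughout is that $\Lat(\cdot)$ is a valuation on the algebra of polytopes, so it respects inclusion-exclusion and half-open decompositions.

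Concretely, I would first choose a triangulation of $P$ into lattice $d$-simplices $\Delta_1,\ldots,\Delta_N$ whose vertices all lie in $\vert(P)$ (such a triangulation always exists, e.g.\ by pulling vertices in some order). Refining to a disjoint, half-open decomposition, $i(P,t)$ becomes a sum of Ehrhart-type counts for half-open lattice simplices, so it suffices to prove the statement for a single lattice $d$-simplex $\Delta$ with vertices $v_0,\ldots,v_d\in\Z^D$. For this, embed $\Delta$ at height $1$ in $\R^{D+1}$ by setting $w_i=(v_i,1)$, and let $C=\cone(w_0,\ldots,w_d)$. Then lattice points of $t\Delta$ are in bijection with lattice points of $C$ whose last coordinate equals $t$. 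Using the half-open fundamental parallelepiped $\Pi=\{\sum\lambda_i w_i:0\le\lambda_i<1\}$, every point of $C\cap\Z^{D+1}$ decomposes uniquely as $p+\sum n_i w_i$ with $p\in\Pi\cap\Z^{D+1}$ and $n_i\in\Z_{\ge 0}$. Summing over the last coordinate yields
\[
\sum_{t\ge 0} i(\Delta,t)\,z^t \;=\; \frac{h^*(z)}{(1-z)^{d+1}},
\]
where $h^*(z)$ has nonnegative integer coefficients and degree at most $d$, since the last coordinate of any point of $\Pi$ is strictly less than $d+1$. Expanding $(1-z)^{-(d+1)}=\sum_{t\ge 0}\binom{t+d}{d}z^t$ then exhibits $i(\Delta,t)$ as a polynomial in $t$ of degree at most $d$ for every $t\ge 0$.

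To identify the degree as exactly $\dim P$, I would appeal to the asymptotic estimate $i(P,t)/t^d\to\vol(P)$ as $t\to\infty$, obtained by viewing $\Lat(tP)$ as a Riemann sum for the indicator function of $P$ on the lattice $\tfrac{1}{t}\Z^D$. In the full-dimensional case this forces the leading coefficient to equal $\vol(P)>0$; if $P$ is not full-dimensional, the same argument applies inside the affine lattice spanned by $P$. The step I expect to be the main obstacle is the triangulation reduction: a naive inclusion-exclusion produces intersections that are lower-dimensional lattice polytopes, so the bookkeeping must be arranged through a genuinely disjoint half-open decomposition in order to write $i(P,t)$ as a clean sum of simplex contributions without sign cancellation across dimensions. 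Producing such a decomposition compatibly with $\Z^D$ is the technical point deserving the most care.
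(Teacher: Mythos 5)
Your proof is correct and is essentially the classical Ehrhart--Stanley argument: triangulate into lattice simplices, pass to a disjoint half-open decomposition, cone each simplex at height one, split $C\cap\Z^{D+1}$ via the fundamental parallelepiped, and read off the polynomial from $\sum_t i(\Delta,t)z^t = h^*(z)/(1-z)^{d+1}$. The paper itself does not prove Theorem~\ref{thm:ehrhart0}; it cites Ehrhart and moves on. The closest the paper comes to a proof is the ``modified version'' Theorem~\ref{thm:ehrhart}, which derives polynomiality (together with the coefficient formula \eqref{equ:coeff}) in two lines from McMullen's formula \eqref{equ:exterior1}: dilating $P$ by $t$ fixes each $\alpha(F,P)$ and scales $\nvol(F)$ by $t^{\dim F}$. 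That route is much shorter, but it is not self-contained -- it presupposes the existence of an $\alpha$-construction, which is itself a theorem of McMullen (or an explicit construction as in Berline--Vergne), and in fact McMullen's original existence argument and Berline--Vergne's construction both lean on exactly the kind of cone decompositions and valuation identities you are using. So the trade-off is: your argument is elementary and self-contained and additionally produces the $h^*$-vector nonnegativity for simplices as a byproduct; the paper's route is nearly free once McMullen's formula is granted and immediately yields the face-indexed formula for each Ehrhart coefficient, which is what the paper actually needs for its positivity program. Your flagged worry about the half-open decomposition is the right one, and it is resolved exactly as you suggest (pulling triangulations plus a visibility/Beneath-Beyond half-open rule give a genuine partition of $P$ into relatively open simplices compatibly with $\Z^D$), so there is no gap.
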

We call the function $i(P, t)$ the \emph{Ehrhart polynomial} of $P.$
A few coefficients of $i(P,t)$ are well understood: the leading coefficient is equal to the normalized volume of $P$, the second coefficient is one half of the sum of the normalized volumes of facets, and the constant term is always $1$. Although formulas are derived for the other coefficients, they are quite complicated. One notices that the leading, second and last coefficients of the Ehrhart polynomial of any integral polytope are always positive; but it is not true for the rest of the coefficients for general polytopes. We say a polytope has \emph{Ehrhart positivity} or is \emph{Ehrhart positive} if it has positive Ehrhart coefficients. 

A few families of polytopes are known to be Ehrhart positive. Zonotopes, in particular regular permutohedra, are Ehrhart positive \cite[Theorem 2.2]{zonotopes}. Cyclic polytopes also have this property. Their Ehrhart coefficients are given by the volumes of certain projections of the original polytope \cite{cyclic}. Stanley-Pitman polytopes are defined in \cite{stanley-pitman} where a formula for their Ehrhart polynomials is given and from which Ehrhart positivity follows. Recently in \cite{deloera} De Loera, Haws, and Koeppe study the case of matroid base polytopes and conjecture they are Ehrhart positive. Both Stanley-Pitman polytopes and matroid base polytopes fit into a bigger family: generalized permutohedra. 

In \cite{bible} Postnikov defines generalized permutohedra as polytopes obtained by moving the vertices of a usual permutohedron while keeping the same edge directions. 
He also considers a strictly smaller family, type $y$, consisting of sums of dilated simplices. He describes the Ehrhart polynomial for the type $y$ family in \cite[Theorem 11.3]{bible}, from which Ehrhart positivity follows. The type $y$ family includes the Stanley-Pitman polytopes, associahedra, cyclohedra, and more (see \cite[Section 8]{bible}), but fails to contain matroid base polytopes 
\cite[Proposition 2.4]{matroidpolytope}.
We consider the following conjecture on generalized permutohedra, generalizing the conjecture on Ehrhart positivity of matroid base polytopes given in \cite{deloera} by De Loera et al,\begin{conj}\label{positivity}

Integral generalized permutohedra are Ehrhart positive.
\end{conj}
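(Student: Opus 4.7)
The plan is to exploit McMullen's formula as realized by the Berline--Vergne (BV) valuation $\alpha$. Under that framework, for any integral polytope $P$ one has
$$i(P,t) = \sum_{F \leq P} \alpha(F,P)\, \vol(F)\, t^{\dim F},$$
where $\alpha(F,P)$ depends only on the transverse (equivalently, normal) cone of $P$ at $F$. Consequently, the coefficient of $t^k$ in $i(P,t)$ is a sum of the nonnegative quantities $\vol(F)$ weighted by the numbers $\alpha(F,P)$. A sufficient condition for Ehrhart positivity of $P$ is therefore that $\alpha(F,P) \geq 0$ for every face $F$.

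The next step is to use the defining combinatorial feature of generalized permutohedra: the normal fan of any generalized permutohedron $P$ is a coarsening of the braid arrangement. Equivalently, each normal cone $\ncone(F,P)$ is a union of Weyl chambers, which are precisely the normal cones at vertices of a regular permutohedron $\Perm$. Because $\alpha$ is determined by the cone data and behaves well under such dissections, one should be able to write
$$\alpha(F,P) = \sum_{v} \alpha(v,\Perm),$$
where the sum runs over those vertices $v$ of $\Perm$ whose normal cone lies in $\ncone(F,P)$; analogous decompositions hold in higher codimension using faces of $\Perm$ of matching dimension. This reduces Conjecture~\ref{positivity} to a single uniform statement: the BV valuation $\alpha$ is nonnegative on every face of the regular permutohedron, which is exactly the conjecture highlighted in the abstract.

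To attack this reduced statement I would combine symmetry, induction on dimension, and direct computation. The symmetric group $\S_{D+1}$ acts transitively on chambers of the braid arrangement, so only one representative face per $\S_{D+1}$-orbit needs to be analyzed, and the BV formula then yields an explicit rational expression in the edge vectors of that chamber. In low dimension and small codimension these expressions can be evaluated and shown to be nonnegative one at a time, which is precisely what the present paper carries out for $D \leq 6$ and for codimension at most $3$.

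The principal obstacle is establishing nonnegativity of $\alpha$ on $\Perm$ in full generality. The BV construction depends on non-canonical auxiliary data (an inner product or similar choice) and is defined recursively over subfans, so no manifestly positive closed form is available. A complete proof will very likely require either a new combinatorial model for $\alpha$ on Weyl chambers or an induction that assembles the subfan contributions in a transparently nonnegative way; barring such an insight, the reduction above combined with case-by-case verification yields only the partial and dimension-bounded results advertised in the abstract.
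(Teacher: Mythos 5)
Your outline reproduces the paper's own strategy: invoke McMullen's formula realized by the Berline--Vergne valuation, reduce to positivity of $\alpha$ on the regular permutohedron via the refinement relation between normal fans (Proposition~\ref{prop:coarser} plus the Reduction Theorem~\ref{thm:reduction-gen}), exploit the $\S_{n+1}$-symmetry to work orbit-by-orbit, and then verify the reduced statement computationally in low dimension and low codimension. You also correctly note that, as things stand, this yields only the partial results of Theorems~\ref{thm:truelowdim} and~\ref{thm:34coeff}; Conjecture~\ref{positivity} itself remains open, so there is no complete proof to match against.

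Two small points of precision. First, you want $\alpha(F,P) > 0$, not merely $\alpha(F,P)\geq 0$: nonnegativity of all the $\alpha$'s only gives nonnegativity of the Ehrhart coefficients, whereas the claim is strict positivity (this is Lemma~\ref{lem:red1}). Second, your displayed decomposition $\alpha(F,P)=\sum_v \alpha(v,\Perm)$ sums over vertices, which is the correct shape only when $F$ is a vertex; the general statement (the lemma preceding Theorem~\ref{thm:reduction-gen}) decomposes $\ncone(F,P)$ into normal cones of faces $G_i$ of the regular permutohedron of the \emph{same dimension} as $F$, giving $\alpha(F,P)=\sum_i\alpha(G_i,\Perm)$. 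You signal awareness of this in the sentence that follows, but the formula as written conflates the codimension-$n$ case with the general one.
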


The above conjecture is the original motivation of this paper. However,
instead of studying it directly, we reduce it to another conjecture which only concerns regular permutohedra, a smaller family of polytopes. 

\subsection{McMullen's formula, $\alpha$-positivity and uniqueness}

In 1975 Danilov \cite{danilov} asked, in the context of toric varieties, whether 
the following formula holds for any integral polytope $P$:
\begin{equation}\label{equ:exterior}
\Lat(P) = \displaystyle \sum_{F: \textrm{ a face of $P$}} \alpha(F,P) \ \nvol(F) ,
\end{equation}
where the value of $\alpha(F,P)$ depends only on the normal cone of $P$ at $F$.

McMullen \cite{mcmullen} was the first to confirm the existence of Formula \eqref{equ:exterior} in a non constructive way. 
Therefore, we call \eqref{equ:exterior} \emph{McMullen's formula}.
Morelli \cite{morelli} was the first to supply a deterministic algorithm to choose $\alpha(F, P)$. 
In \cite{toddclass}, Pommersheim and Thomas gave a canonical construction of $\alpha(F, P)$ from the Todd class
of a toric variety. In 2007, Berline and Vergne gave the first construction of $\alpha(F, P)$ in \cite{localformula} in the primary space without using toric varieties. 



One immediate consequence of the existence of Formula \eqref{equ:exterior} is that if $\alpha(F,P)$ is positive for each face $F$ of $P,$ then Ehrhart positivity follows. (See Theorem \ref{thm:ehrhart} and Lemma \ref{lem:red1}.) 
Hence, it is natural to say that a polytope $P$ has \emph{$\alpha$-positivity} or is \emph{$\alpha$-positive} if all $\alpha$'s associated to $P$ are positive.

Although there are different constructions for $\alpha(F,P)$, Berline-Vergne's construction has certain nice properties that are good for our purpose, and thus we will use their construction in our paper. We refer to their construction for $\alpha(F, P)$ as the \emph{BV-$\alpha$-valuation}. To be more precise, we will use the terminologies \emph{BV-$\alpha$-positivity} and \emph{BV-$\alpha$-positive} to indicate that the $\alpha$'s we use are from the BV-$\alpha$-valuation. 

At present, the explicit computation of the BV-$\alpha$-valuation is a recursive, complicated process, but we carry it out in the special example of regular permutohedra of small dimensions, whose symmetry simplifies the computations. Based on our empirical results, we conjecture the following:
\begin{conj}\label{conj:alphas}
  Every regular permutohedron is BV-$\alpha$-positive.
\end{conj}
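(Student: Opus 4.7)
The plan is to exploit two features that together dramatically cut down the problem: the $S_n$-symmetry of $\Perm_n$, and the product structure of its normal cones. Faces of $\Perm_n$ are indexed by ordered set partitions $(S_1,\ldots,S_k)$ of $[n]$, and the normal cone at such a face is (up to a unimodular isomorphism of the ambient lattice) the product of the top-dimensional vertex cones of the smaller permutohedra $\Perm_{|S_1|},\ldots,\Perm_{|S_k|}$. Since the BV-$\alpha$-valuation depends only on the normal cone, is invariant under lattice isomorphisms, and is multiplicative on products of cones lying in complementary rational subspaces, $\alpha(F,\Perm_n)$ factors as a product of the numbers $\alpha(v,\Perm_{|S_i|})$ where $v$ is any vertex. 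Combined with $S_n$-equivariance (all vertices of $\Perm_m$ lie in one orbit), this reduces Conjecture \ref{conj:alphas} to a single one-parameter family of claims: for every $m\geq 1$, the rational number
\[
\mu_m \;:=\; \alpha(v,\Perm_m) \qquad \text{is positive.}
\]

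Next I would try to compute $\mu_m$ recursively. The Berline-Vergne construction writes the exponential generating function of a cone as an average of contributions from its transverse cones along proper faces; iterating this, one obtains a recursion expressing $\mu_m$ as an explicit combination of the $\mu_j$'s with $j<m$ together with Todd-series (Bernoulli-type) correction factors coming from the Weyl chamber of $A_{m-1}$. At this point the most promising route is to invoke the equivalent reformulations the paper advertises later (mixed valuations and Todd class): either recognize $\mu_m$ as a mixed volume of certain effectively given polytopes, or as a cohomological pairing on the toric variety $X(\Perm_m)$ between the Todd class and a nef class. Both interpretations would supply manifest positivity and would automatically propagate through the product decomposition above.

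The main obstacle is that the BV recursion is not sign-definite on its face: it mixes subtractions among transverse contributions with alternating Todd-series expansions, so there is no naive monotonicity available. The numerical verification through dimension $6$ and codimension $3$ confirms positivity but reveals no obvious pattern that induction can latch onto. Thus the hard step is to find the \emph{right} positive decomposition of $\mu_m$ — presumably a geometric model (a lattice-point count, a mixed volume, an intersection number on $X(\Perm_m)$) that tautologically computes $\mu_m$ as a sum of nonnegative terms. Short of such a model, any purely analytic attack would have to dominate the negative Bernoulli/Todd contributions by the positive ones uniformly in $m$, and controlling those denominators (which grow and oscillate in sign) appears to be the true technical bottleneck.
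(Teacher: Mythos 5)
This is Conjecture~\ref{conj:alphas}, not a theorem: the paper never proves it. What the paper supplies is (a) partial verification for $n\le 6$ and for faces of codimension $\le 3$ (Theorems~\ref{thm:truelowdim} and~\ref{thm:34coeff}), and (b) two \emph{equivalences} — the mixed-lattice-point reformulation of Corollary~\ref{cor:equimixed} and the Todd-class reformulation of Proposition~\ref{prop:toddequiv} — neither of which yields positivity by itself. So there is no ``paper's own proof'' to match, and the proposal, which you yourself close by conceding that the decisive positive decomposition of $\mu_m$ is still missing, is a research outline rather than an argument.

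Independent of that, the first reduction you make is false. You claim that the cone attached to a face indexed by an ordered set partition $(S_1,\dots,S_k)$ is, up to unimodular isomorphism, a product of top-dimensional vertex cones of the permutohedra $\Perm_{|S_i|}$, so that $\alpha(F,\Pi_n)$ would depend only on the multiset $\{|S_i|\}$ and ultimately collapse to a single sequence $\mu_m$. Already the dimensions do not match: the \emph{face} $F$ is the product $\prod_i \Perm(\v_{M_i})$ (Proposition~\ref{prop:faceposet}), but the quantity $\alpha(F,\Pi_n)$ is evaluated on the \emph{pointed feasible cone} $\fcone^p(F,\Pi_n)$, whose dimension equals the codimension $k-1$ of $F$, not $\sum(|S_i|-1)$. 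More decisively, the paper's Lemma~\ref{lem:3coeff} gives, for a codimension-$2$ face with composition $(i,\,j-i,\,n+1-j)$,
\[
\alpha_n(S)=\frac14-\frac1{12}\left(\frac{i}{j}+\frac{n+1-j}{n+1-i}\right),
\]
which genuinely varies with the block sizes $i,j-i,n+1-j$ and not merely with the number of blocks. The same non-constancy is visible in the codimension-$3$ formula of Lemma~\ref{lem:4coeff} and throughout Examples~\ref{ex:pi5}--\ref{ex:n6}. So the cone-lattice pair $(\fcone^p(F_S,\Pi_n),\Lambda_S)$ is \emph{not} independent of the block sizes, and the hoped-for multiplicative collapse does not occur.

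What is actually going on in the paper's calculations (Section~\ref{sec:evidence}) is that $\fcone^p(F_S,\Pi_n)$ is generated by orthogonal projections $\u_i$ of the simple roots $\e_i-\e_{i+1}$, $i\notin S$, onto $L_S^\perp$, and these projections retain explicit dependence on the $m_i$'s through their denominators; that is exactly the source of the nontrivial rational numbers tabulated. Your instinct to look for a manifestly nonnegative geometric model is sound and is in fact what Corollary~\ref{cor:equimixed} and Proposition~\ref{prop:toddequiv} point toward, but neither of those equivalences is a proof, and neither supports the product/recursion structure you posited. To salvage the approach you would need, at minimum, a correct description of the feasible cones and a genuinely positive formula for the mixed quantities $\ml^k(\Delta_{s_1,n+1},\dots,\Delta_{s_k,n+1})$ (or for the $r_\sigma$ in~\eqref{eqn:danilov}), which is precisely the open content of the conjecture.
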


One important property of the BV-$\alpha$-valuation enables us to reduce the problem of proving the Ehrhart positivity of all generalized permutohedra to proving the positivity of all the $\alpha$'s arising from the regular permutohedra.
\begin{thm}\label{thm:reduction}
  
Conjecture \ref{conj:alphas} implies Conjecture \ref{positivity}.
\end{thm}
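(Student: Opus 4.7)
The plan is to prove the stronger statement that every integral generalized permutohedron $P$ is BV-$\alpha$-positive, meaning $\alpha(F,P) > 0$ for every face $F$ of $P$, and then deduce Ehrhart positivity from McMullen's formula.

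\emph{First, reduction from BV-$\alpha$-positivity to Ehrhart positivity.} Applying McMullen's formula \eqref{equ:exterior} to the dilate $tP$ of an integral polytope $P$, and using $\alpha(F,tP) = \alpha(F,P)$ (the normal cone is invariant under dilation) together with $\nvol(tF) = t^{\dim F}\nvol(F)$, one obtains
\[
i(P,t) \;=\; \sum_{F\text{ face of } P} \alpha(F,P)\,\nvol(F)\,t^{\dim F}.
\]
Since $\nvol(F) > 0$ for every nonempty face, BV-$\alpha$-positivity of $P$ immediately implies positivity of every Ehrhart coefficient of $P$.

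\emph{Second, propagation of BV-$\alpha$-positivity from a regular permutohedron to an arbitrary generalized permutohedron.} Fix a generalized permutohedron $P \subseteq \R^D$ and a regular permutohedron $\Pi \subseteq \R^D$. By the defining property of generalized permutohedra, the braid arrangement fan $\cN(\Pi)$ refines $\cN(P)$. Consequently, for every face $F$ of $P$, the normal cone $\ncone(F,P)$ is polyhedrally subdivided by those $\ncone(G,\Pi)$ with $\dim G = \dim F$ and $\ncone(G,\Pi) \subseteq \ncone(F,P)$. I then invoke the key valuation property of the BV construction: $\alpha$, viewed purely as a function of the normal cone, is a simple valuation on cones in each linear subspace, hence additive on polyhedral subdivisions by cones of the same dimension. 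This yields
\[
\alpha(F,P) \;=\; \sum_{\substack{G \text{ face of } \Pi \\ \dim G = \dim F \\ \ncone(G,\Pi) \subseteq \ncone(F,P)}} \alpha(G,\Pi).
\]
Under Conjecture \ref{conj:alphas} every summand is strictly positive, so $\alpha(F,P) > 0$, and combined with the first step this implies Conjecture \ref{positivity}.

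\emph{Main obstacle.} The hard part is justifying the additivity formula for $\alpha$: one must verify, directly from Berline--Vergne's recursive construction, that $\alpha$ vanishes on cones of less than full dimension within their linear span, so that all lower-dimensional boundary contributions in the subdivision of $\ncone(F,P)$ cancel. Once this simple-valuation property is established, the combinatorial decomposition of $\ncone(F,P)$ into same-dimensional sub-cones follows immediately from the refinement between $\cN(\Pi)$ and $\cN(P)$.
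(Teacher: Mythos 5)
Your proposal takes essentially the same route as the paper. Step one is the paper's Lemma \ref{lem:red1} verbatim (apply McMullen's formula to $tP$). Step two is the paper's Theorem \ref{thm:reduction-spe}, proved via the lemma that if $\ncone(F,P) = \cup_i \ncone(G_i,Q)$ with all $G_i$ of the same dimension as $F$, then $\alpha(F,P) = \sum_i \alpha(G_i,Q)$; combined with Proposition \ref{prop:coarser}, this propagates positivity from the braid fan to any coarsening.

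On the ``main obstacle'' you flag: you are right that the crux is establishing that $\alpha$, viewed on normal cones, behaves as a simple valuation, but you do not need to re-derive a vanishing property ``directly from Berline--Vergne's recursive construction.'' The paper gets it for free from the stated properties of $\Psi$ via polarity: a subdivision $\ncone(F,P)=\cup_i\ncone(G_i,Q)$ gives $[\ncone(F,P)] \equiv \sum_i [\ncone(G_i,Q)]$ modulo lower-dimensional polyhedra; taking polars and using \eqref{equ:fncone} converts this into $[\fcone^p(F,P)] \equiv \sum_i [\fcone^p(G_i,Q)]$ modulo polyhedra with lines (since the polar of a cone that is not full-dimensional in its span contains a line); then Property (P1) (valuation) and Property (P3) ($\Psi$ kills cones with lines) give $\Psi(\fcone^p(F,P)) = \sum_i \Psi(\fcone^p(G_i,Q))$, i.e.\ $\alpha(F,P)=\sum_i\alpha(G_i,Q)$. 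So the ``simple valuation on normal cones'' picture you describe is exactly the polar shadow of (P1)+(P3) acting on feasible cones; once you phrase it that way the obstacle dissolves into already-known properties rather than a fresh computation with the BV recursion.

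One small caution about phrasing: $\alpha$ itself is only defined on full-dimensional pointed cones (feasible cones of faces), so ``$\alpha$ vanishes on cones of less than full dimension'' does not literally parse; what vanishes is $\Psi$ applied to the polars of those lower-dimensional pieces, because those polars contain lines. Making that translation explicit is the only thing your write-up is missing.
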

Therefore, we focus on proving Conjecture \ref{conj:alphas} instead. In this paper, we provide partial progress on proving Conjecture \ref{conj:alphas} (and thus Conjecture \ref{positivity}):

\begin{thm}\label{thm:partial}
  
  \begin{enumerate}
    \item 
  For all $n \le 6,$ the regular permutohedron $\Pi_n$ is BV-$\alpha$-positive. 
Therefore, all the integral generalized permutohedra (including matroid base polytopes) of dimension at most $6$ are Ehrhart positive. 

\item 
  For any $n,$ and any face $F$ of $\Pi_n$ of codimension $2$ or $3,$ we have $\alpha(F, \Pi_n)$ is positive, where $\alpha$ is the BV-$\alpha$-valuation.

Hence, the third and fourth coefficients of the Ehrhart polynomial of any integral generalized permutohedron (including matroid base polytopes) are positive. 

\end{enumerate}

\end{thm}


During the process of investigating positivity of the $\alpha$'s, another natural question arises:
Although there are different constructions for $\alpha$, is it possible that under certain constraints, the construction is unique? Our second main result of this paper is an affirmative answer to this question when we focus on regular permutohedra. 

\begin{thm}\label{thm:uniqueness0}
	Suppose $\alpha$ is a construction such that McMullen's formula holds and it is symmetric about the coordinates (see Definition \ref{defn:symmetric}).
Then all the $\alpha$'s arising from the regular permutohedra $\Pi_n$ are uniquely determined.
\end{thm}

Even though the above theorem does not seem to be related to attempts of proving Conjecture \ref{conj:alphas},
as a consequence of techniques used in proving the theorem, we give in Corollary \ref{cor:equimixed} an equivalent statement to Conjecture \ref{conj:alphas} in terms of mixed valuations.

\subsection*{Organization of the paper}
In Section \ref{sec:background}, 
we give basic definitions and review background results that are relevant to our paper. 
In Section \ref{sec:exterior}, we give details of the BV-$\alpha$-valuation, discuss consequences of the properties of this construction. In particular, we complete the proof of Theorem \ref{thm:reduction}.

In Section \ref{sec:generic}, assuming the $\alpha$ function in McMullen's formula \eqref{equ:exterior} is symmetric about the coordinates (a property of the BV-$\alpha$-valuation), we derive a combinatorial formula for $\Lat(\Perm(\v))$ indexed by subsets of $[n],$ where $\Perm(\v)$ is a ``generic permutohedron'', which belongs to a family of generalized permutohedra containing the regular permutohedra.
We then finish with a proof for Theorem \ref{thm:partial}. 

In Section \ref{sec:unique}, using the combinatorial formula we derived in Section \ref{sec:generic} and theories of mixed valuations, we show that the $\alpha$-values arising from the regular permutohedron are unique as long as we assume $\alpha$ is symmetric about the coordinates, as well as present equivalent statements to Conjecture \ref{conj:alphas} in terms of mixed valuations.

We finish with some natural questions arising from this paper in Section \ref{sec:further}.

\subsection*{Acknowledgements}
\label{sec:ack}
The authors thank Federico Ardila and Alexander Barvinok for helpful discussions. We also thank Nicole Berline and Michele Vergne for useful comments and for providing a code used in some of the computations.

\section{Background}\label{sec:background}

In this section and the next section, we assume the ambient space is $\R^D,$ and $\Z^D$ is the \emph{lattice} in $\R^D.$ 
For any $D$-vector $\bbeta$,  $\bbeta_i( \cdot)$ is the linear function that maps $\x \in \R^D$ to the scalar product of $\bbeta$ and $\x.$ Since we can consider $\bbeta( \cdot)$ as a point in the dual space $(\R^D)^*$ of $\R^D,$ we will use the notation $\bbeta$ (or any bold letter) to denote both the $D$-vector and the linear function.

We assume familiarity with basic definitions of polyhedra and polytopes as presented in \cite{barvinok, zie}, and only review terminologies and setups that are relevant to us. 
 A \emph{polyhedron} is the set of points defined by a linear system of 
 inequalities. 
A \emph{polytope} is a bounded polyhedron. (A polytope can also be defined as the convex hull of a finite set of points.)
%
An {\it integral} polyhedron is a polyhedron whose vertices are all lattice points, i.e., points with integer coordinates.

\begin{notn}\label{notn:afflin}
For any set of points $A \subseteq \R^D,$ we denote by $\aff(A)$ the affine span of $A$, and $\lin(A)$ the linear space obtained by shifting $\aff(A)$ to the origin.
\end{notn}

Let $V$ be a subspace of $\R^D,$ and $\Lambda := V \cap \Z^D$ the lattice in $V.$ 
For any polytope $P$ such that $\lin(P) \subseteq V,$ 
we define the \emph{volume of $P$ normalized to the lattice $\Lambda$} to be the integral
$\ds \vol_\Lambda(P) := \int_P \ 1 \ d  \Lambda,$
where $d  \Lambda$ is the canonical Lebesgue measure defined by the lattice $\Lambda.$ 
In the case where $\dim P = \dim \Lambda$, we get the \emph{normalized volume} of $P$, denoted by $\nvol(P).$

\subsection{Cones and fans} A \emph{(polyhedral) cone} is the set of all nonnegative linear combinations of a finite set of vectors. 
A cone is \emph{pointed} if it does not contain a line.
A cone $C$ is \emph{rational} if the cone $C$ is generated by vectors with rational coordinates.



\begin{defn}
  Suppose $V$ is a subspace of $\R^D$ and 
  $P$ is a polytope satisfying $\lin(P) \subseteq V.$ 
Given any face $F$ of $P$, the \emph{normal cone} of $P$ at $F$ with respect to $V$ is 
\[
\ncone_V(F, P):=\left\{ \u \in V^*:\quad \u(\p_1) \geq \u(\p_2), \quad \forall \p_1\in F,\quad \forall \p_2\in P \right\}.
\]
Therefore, $\ncone_V(F,P)$ is the collection of linear functions $\u$ in $V^*$ such that $\u$ attains maximum value at $F$ over all points in $P.$

The \emph{normal fan} $\Sigma_V(P)$ of $P$ with respect to $V$ is the collection of all normal cones of $P$. 

\end{defn}

\begin{defn} Suppose $W$ is a subspace of $\R^D.$
	Let $K \subseteq W$ be a cone. The \emph{polar cone} of $K$ with respect to $W$ is the cone 
	\[ K_W^\circ = \{ \y \in W^* \ | \ \y(\x) \le 0,  \forall \x \in K\}.\]
In the situation where $K$ is full-dimensional in $W,$ we will omit the subscript $W$ and the words ``with respect to $W$''.
\end{defn}
One can check that $K^\circ (= K_{{\rm span}K}^\circ)$ is always a pointed cone.
We state without proofs the following straightforward results for normal cones, which will be useful for our paper.
\begin{lem}\label{lem:ncone}
	Let $F$ be a face of a polytope $P \subseteq \R^D,$ and suppose $L = \lin(F)$ and $\lin(P) \subseteq V.$ (Recall Notation \ref{notn:afflin}.)
	Then the followings are true:
\begin{alist}
\itm $\ncone_V(F,P)$ spans the orthogonal complement of $L^*$ with respect to $V^*$. 
Hence,
$\dim \ncone_V(F, P) = \dim V - \dim F.$
\itm The pointed cone $(\ncone_V(F,P))^\circ$ is invariant under the choice of $V$ (as long as $\lin(P) \subseteq V$). So we may omit the subscript $V$ and just write $(\ncone(F,P))^\circ$.

Furthermore, $(\ncone(F,P))^\circ$ is full-dimensional in the orthogonal complement of $L$ with respect to $\lin(P),$ and is of dimension $\dim P - \dim F.$
\end{alist}
\end{lem}
We remark that this unique pointed cone asserted in b) is known as the \emph{pointed feasible cone} of $P$ at $F,$ which is important in Berline-Vergne's construction. (See \S \ref{subsec:BV}.)

\subsection{Generalized permutohedra} 
We introduce \emph{generalized permutohedra}, the main family of polytopes we study in this paper. In this part and any later part that is related to generalized permutohedra, we assume $D = n+1,$ i.e., the ambient is $\R^{n+1}.$ First, we present the \emph{usual permutohedron} as the convex hull of a finite number of points.

\begin{defn}
  Given a point $\v = (v_1,v_2,\cdots,v_{n+1}) \in \R^{n+1}$, we define the \emph{usual permutohedron}
\[\Perm(\v) = \Perm (v_1,v_2,\cdots, v_{n+1}) := \conv\left( \left(v_{\sigma(1)},v_{\sigma(2)},\cdots, v_{\sigma({n+1})}\right):\quad \sigma\in \fS_{n+1}\right).\]
In particular, if $\v = (1, 2, \dots, {n+1}),$ we obtain the \emph{regular permutohedron}, denoted by $\Pi_{n},$
\[ \Pi_{n} := \Perm (1, 2, \dots, n+1).\]
As long as there are two different entries in $\v$ we have $\dim (\Perm(\v)) = n$.
\end{defn}

The \emph{generalized permutohedra} is originally introduced by Postnikov \cite[Definition 6.1]{bible} as polytopes obtained from usual permutohedra by moving vertices while preserving all edge directions. 
In \cite{faces}, Postnikov, Reiner, and Williams give several equivalent definitions, one of which uses concepts of normal fans.

\begin{defn}
  The \emph{Braid arrangement fan}, denoted by $\B_n,$  is the complete fan in $\R^{n+1}$ given by the hyperplanes
\[
  x_i - x_j = 0 \quad \text{for all $i\neq j$.}
\]
\end{defn}

\begin{prop}[Proposition 3.2 of \cite{faces}] \label{prop:coarser}
  A polytope $P$ in $V:=\mathbb{R}^{n+1}$ is a generalized permutohedron if and only
if its normal fan $\Sigma_V(P)$ with respect to $V$ is refined by the Braid arrangement fan $\B_n$.
\end{prop}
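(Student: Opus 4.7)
The plan is to combine the computation of the normal fan of $\Pi_n$ with the standard correspondence between coarsening of normal fans and preservation of edge directions.

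First I would verify that $\Sigma_V(\Pi_n) = \B_n$. The vertices of $\Pi_n = \Perm(1,2,\ldots,n+1)$ are the $(n+1)!$ permutations of $(1,2,\ldots,n+1)$. The rearrangement inequality shows that a linear functional $\u \in V$ is maximized on $\Pi_n$ at the unique vertex whose coordinates are ordered in the same way as the coordinates of $\u$; hence the set of $\u$ maximized at a fixed vertex is precisely a maximal chamber of $\B_n$, and the two fans coincide.

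Next I would translate refinement by $\B_n$ into a condition on edge directions. By Lemma \ref{lem:ncone}, the codimension-one cones of $\Sigma_V(P)$ are exactly the normal cones of edges of $P$, each lying in the hyperplane orthogonal to the corresponding edge direction. Since the codimension-one cones of $\B_n$ lie in the hyperplanes $\{x_i = x_j\} \cap V$, we see that $\Sigma_V(P)$ is refined by $\B_n$ if and only if every edge of $P$ is parallel to some $\e_i - \e_j$. The forward direction of the proposition is then immediate: preserving the edge directions of $\Pi_n$ forces all edges of $P$ to lie among these directions.

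For the converse, assume $\Sigma_V(P)$ is refined by $\B_n$ and consider the Minkowski interpolation $P_t = (1-t)\Pi_n + tP$ for $t \in [0,1]$. The class of polytopes whose normal fan is refined by $\B_n$ is closed under Minkowski sum and positive scaling, so every $P_t$ has normal fan refined by $\B_n$; in fact for $0 < t < 1$ the normal fan of $P_t$ equals $\B_n$ itself (the common refinement of $\B_n$ and $\Sigma_V(P)$). Tracking the vertex of $P_t$ indexed by each chamber of $\B_n$ gives a continuous path of polytopes whose vertices move along straight lines while each edge remains parallel to its counterpart in $\Pi_n$ (possibly degenerating to a point at $t=1$). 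This realizes $P$ as a generalized permutohedron in Postnikov's original sense.

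The main obstacle is the last step, where one must reconcile this Minkowski-interpolation picture with Postnikov's informal description of ``moving vertices while preserving edge directions.'' The crucial subtlety is that edges are allowed to degenerate along the deformation, which is exactly what permits every coarsening of $\B_n$ to occur as the normal fan of a generalized permutohedron. Making this rigorous uses no new ideas beyond the standard polytope--fan correspondence, and can be formalized via the deformation cone of $\B_n$, equivalently via convex piecewise-linear support functions on $\B_n$.
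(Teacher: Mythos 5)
The paper does not prove this proposition: it is quoted as Proposition 3.2 of Postnikov, Reiner, and Williams \cite{faces} and used as a black box, so there is no in-paper argument to compare your proof against. That said, your sketch follows the standard route for this equivalence and is essentially sound. Two points deserve a bit more care than ``we see.'' First, the claimed equivalence in your second paragraph is true, but its ``if'' direction (all edges of $P$ parallel to some $\e_i-\e_j$ implies $\B_n$ refines $\Sigma_V(P)$) rests on the convexity observation that a full-dimensional cone in $V$ whose boundary walls all lie in the braid hyperplanes $\{x_i=x_j\}$ is automatically a union of closed chambers of $\B_n$; this is elementary but should be stated, since refinement is a condition on all cones, not just codimension-one ones, and you invoke this implication in the forward direction of the proposition. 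Second, as you yourself flag, the converse direction cannot be finished rigorously until one commits to a precise meaning of Postnikov's informal ``move vertices, keep edge directions'' definition. The honest way to close your final paragraph is exactly what \cite{faces} does: replace the informal definition by membership in the deformation cone of $\B_n$, equivalently existence of a piecewise-linear convex support function on $\B_n$, at which point your Minkowski-interpolation picture is a clean restatement of the definition rather than a separate argument. With these caveats understood, your proposal is a faithful outline of the argument given in \cite{faces}.
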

It follows from \cite[Proposition 2.6]{bible} that as long as $\v=(v_1,v_2,\cdots,v_{n+1})$ has distinct coordinates, the associated usual permutohedron $\Perm(\v)$ has the Braid arrangement $\B_n$ as its normal fan. We call $\Perm(\v)$ with $\v$ of distinct coordinates a \emph{generic permutohedron}. 
In particular, the regular permutohedron $\Pi_n$ is a generic permutohedron.

\subsection{Algebra of polyhedra and mixed valuations} \label{subsec:mixed}

For a set $S \subseteq \R^D$, the \emph{indicator function} $[ S ]: \R^D
\rightarrow \R$ of $S$ is defined as $[S](x) = 1$ if $x \in S$, and $[S](x)=0$ if $x \not\in S.$
Let $V$ be a subspace of $\R^D.$ The \emph{algebra of polyhedra}, denoted by $\cP(V)$, is the vector space defined as the span of the indicator functions of all polyhedra in $V.$ We similarly define $\cP_b(V)$ as the \emph{algebra of polytopes}, and $\cC(V)$ as the \emph{algebra of cones}. 

A linear transformation $\phi: \cP(V), \cP_b(V), \cC(V) \to W,$ where $W$ is a vector space, is a \emph{valuation}.
Both volume $\vol_\Lambda( \cdot )$ and number of lattice points $\Lat( \cdot )$ are valuations on the algebra of polytopes. However, normalized volume $\nvol( \cdot )$ is not a valuation.


%

Let $\Lambda$ be a sublattice of $\Z^D$ and $V$ is the span of $\Lambda.$
A valuation is a \emph{$\Lambda$-valuation} if it is invariant under $\Lambda$-translation.
We say a valuation $\phi$ is \emph{homogeneous of degree $d$} if $\phi([t P]) = t^d \phi([P])$ for any integral polytope $P$ and $t \in \Z_{\ge 0}.$ It's clear that $\vol_\Lambda$ is homogeneous of degree $\dim \Lambda$, but $\Lat$ is not homogeneous. 




The following important theorem by McMullen is a special case of \cite[Theorem 6]{mcmullen}.
\begin{thm}
	\label{thm:mixed}
Suppose $\phi$ is a homogeneous $\Lambda$-valuation on $\cP_b(V)$ of degree $d.$
Then there exists a function $\cM$ which takes $d$ integral polytopes as inputs such that
\begin{equation}\label{equ:mixed0}
  \phi(t_1P_1+t_2P_2+\cdots+t_kP_k) = \sum_{j_1,\cdots,j_d  \in [k]} \cM(P_{j_1},P_{j_2},\cdots,P_{j_d})t_{j_1}\cdots t_{j_d},
\end{equation}
for any $k \in \Z_{>0},$ any integral polytopes $P_1, \dots, P_k \subset V$ and $t_1, \dots, t_k \in \Z_{\ge 0}.$
\end{thm}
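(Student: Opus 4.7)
The plan is to prove that, for fixed integral polytopes $P_1, \ldots, P_k \subset V$, the function $f(t_1, \ldots, t_k) := \phi(t_1 P_1 + \cdots + t_k P_k)$ agrees with a homogeneous polynomial of degree $d$ on $\Z_{\ge 0}^k$, and then read off $\cM$ from its coefficients.

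First I would establish polynomiality of $f$. The case $k = 1$ is immediate from homogeneity: $f(t_1) = \phi(t_1 P_1) = t_1^d \phi(P_1)$. For the inductive step, it suffices to fix $Q = t_2 P_2 + \cdots + t_k P_k$ and show that $g(t_1) := \phi(t_1 P_1 + Q)$ is polynomial in $t_1$ with coefficients that are polynomial in $t_2, \ldots, t_k$, so that iterating the argument finishes the proof. The crucial combinatorial fact is that for all $t_1 > 0$ the normal fan of $t_1 P_1 + Q$ equals the common refinement of the normal fans of $P_1$ and $Q$, independently of $t_1$; moreover, the vertex of $t_1 P_1 + Q$ selected by a cone $\sigma$ of this refinement has the form $t_1 v_\sigma + w_\sigma$ for vertices $v_\sigma \in P_1$, $w_\sigma \in Q$. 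One can then dissect $t_1 P_1 + Q$ into cells whose indicator functions are inclusion-exclusion combinations of translates of dilated cones (the dilation factor being $t_1$, the translations by $w_\sigma$), and apply $\phi$: $\Lambda$-invariance erases the dependence on the integral translations, while homogeneity of $\phi$ produces the desired polynomial dependence on $t_1$.

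Once polynomiality of $f$ is in hand, homogeneity of $\phi$ gives $f(tt_1, \ldots, tt_k) = t^d f(t_1, \ldots, t_k)$, so $f$ is a homogeneous polynomial of degree exactly $d$ in $(t_1, \ldots, t_k)$. Grouping monomials with multinomial factors yields
\begin{equation*}
f(t_1, \ldots, t_k) \;=\; \sum_{j_1, \ldots, j_d \in [k]} \cM(P_{j_1}, \ldots, P_{j_d})\, t_{j_1} \cdots t_{j_d},
\end{equation*}
which defines $\cM$ as a symmetric function of $d$ polytopes (symmetry being automatic because the right-hand side sums over ordered tuples). That $\cM(Q_1, \ldots, Q_d)$ does not depend on the ambient list $P_1, \ldots, P_k$ used to realize the inputs is checked by augmenting one list to include another and comparing coefficients in $(t_1, \ldots, t_k)$.

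The main obstacle is the polynomiality step. In this abstract setting $\phi$ is only given as a homogeneous $\Lambda$-valuation, not as a geometric quantity such as $\vol_\Lambda$ or $\Lat$, so there is no external formula to differentiate or dilate. The real content lies in producing, purely from the Minkowski-sum structure and $\Lambda$-invariance, an identity in $\cP_b(V)$ showing that $[\,t_1 P_1 + \cdots + t_k P_k\,]$ lies in a subspace parametrized polynomially by the $t_i$ modulo $\Lambda$-translates; any valuation $\phi$ satisfying the hypotheses then inherits polynomial behavior automatically. Once this is pushed through, the extraction of $\cM$ is entirely formal.
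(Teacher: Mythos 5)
The paper does not actually prove this theorem: the sentence preceding its statement reads that it ``is a special case of [Theorem~6]{mcmullen},'' and the paper simply cites McMullen's 1977 result with no argument. So there is no in-paper proof for your attempt to be compared against; what I can do is assess your sketch on its own terms.

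Your outer scaffolding is correct and clean. You reduce the theorem to: (a) $f(t_1,\dots,t_k):=\phi(t_1P_1+\cdots+t_kP_k)$ is a polynomial on $\Z_{\ge0}^k$; (b) the homogeneity of $\phi$ then forces $f$ to be a homogeneous polynomial of degree $d$, since $f(tt_1,\dots,tt_k)=t^df(t_1,\dots,t_k)$ for all nonnegative integer $t$ and a polynomial identity valid at infinitely many $t$ is an identity; (c) $\cM$ is read off from the coefficients, with well-definedness checked by augmenting lists and specializing variables to $0$. All of (b) and (c) are sound (and you correctly write the index set as $[k]$ rather than the paper's typo $[d]$; also, as you note, symmetry of $\cM$ is a normalization choice, and the statement does not require it---the symmetrization $\mathcal{M}\phi$ is introduced afterwards in the paper).

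The genuine gap is step (a), and you acknowledge it yourself in the last paragraph. Polynomiality of $\phi$ on Minkowski sums is exactly the nontrivial content of McMullen's Theorem~6; it is not something one gets for free from homogeneity and $\Lambda$-invariance. Your proposed mechanism---``dissect $t_1P_1+Q$ into cells whose indicator functions are inclusion-exclusion combinations of translates of dilated cones, then apply $\phi$''---runs into a concrete obstruction: the natural decompositions with this ``dilated cone $+$ lattice translate'' structure are tangent-cone decompositions (as in Theorem~\ref{thm:feasible}), which live in the algebra $\cP(V)$ of \emph{unbounded} polyhedra modulo lines, not in $\cP_b(V)$. A homogeneous $\Lambda$-valuation on $\cP_b(V)$ need not extend to $\cP(V)$ vanishing on lines (volume, i.e.\ $\Lat^d$, already fails to), so you cannot apply $\phi$ to such a decomposition. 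A bounded-cell decomposition of $t_1P_1+Q$ (e.g.\ a mixed subdivision) whose cells are $\Lambda$-translates of $t_1$-dilations of a fixed family does not exist in general, so ``$\Lambda$-invariance erases the dependence on the translations'' does not go through as stated; the translates $w_\sigma$ are lattice points, but the cells themselves deform with $t_1$ in a way that is not simply a dilation. To make (a) rigorous one needs the actual McMullen machinery (or, alternatively, the nilpotence of $\log$ in McMullen's polytope algebra), and that is where your proof stops short. Since the paper deliberately outsources exactly this point to the citation, your attempt does not close the gap the paper leaves open; it only reproduces the easy reduction around it.
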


The following definition and lemma are stated in \cite[Section 3 of Chapter IV]{ewald} for the volume valuation (which is a homogeneous valuation). We give the general forms here.
\begin{defn}
  Let $\phi$ and $\cM$ be as in Theorem \ref{thm:mixed}. We define another function $\mphi$ that takes $d$ integral polytopes as inputs as an average of the function $\cM:$
\[ \mphi(P_1, \dots, P_d) := \frac{1}{d!} \sum_{\sigma \in \fS_d} \cM(P_{\sigma(1)}, \dots, P_{\sigma(d)}).\]
It is easy to see that $\mphi$ is uniquely chosen for each $\phi$, and \eqref{equ:mixed0} still holds for $\mphi:$ 
\begin{equation}\label{equ:mixed}
  \phi(t_1P_1+t_2P_2+\cdots+t_kP_k) = \sum_{j_1,\cdots,j_d \in [k]} \mphi(P_{j_1},P_{j_2},\cdots,P_{j_d})t_{j_1}\cdots t_{j_d},
\end{equation}
We call $\mphi$ the \emph{mixed valuation} of $\phi.$ 
\end{defn}

The lemma below gives two properties of the mixed valuation $\mphi.$ We omit the proof which is very similar to that is given in \cite[Section 3 of Chapter IV]{ewald} for volume valuation. 
\begin{lem}\label{lem:mixval}
  \begin{ilist}
  \itm For any integral polytopes $P_1, \dots, P_d$, and any permutation $\sigma \in \fS_d,$ we have
$\mphi(P_1, \dots, P_d) = \mphi(P_{\sigma(1)}, \dots, P_{\sigma(d)}).$
  \itm The function $\mphi$ is a \emph{multi-linear} function, that is, it is linear in each component.
\end{ilist}
\end{lem}

%

Apply the above results to volume valuation, a homogeneous valuation, we obtain the following:
\begin{thm}[Theorem 3.2 of \cite{ewald}] \label{thm:mixedvol}
  Suppose $P_1,\cdots, P_k$ are integral polytopes with $\dim(P_1+\cdots +P_k)=d$. Let $\Lambda$ be the $d$-dimensional lattice $\mathrm{span}(P_1+\cdots + P_k) \cap \Z^D.$ Then
\[
  \vol_\Lambda(t_1P_1+t_2P_2+\cdots+t_kP_k) = \sum_{j_1,\cdots,j_d \in [k]} \mv_\Lambda(P_{j_1},P_{j_2},\cdots,P_{j_d})t_{j_1}\cdots t_{j_d}
\]
where the sum is carried out independently over the $j_i$. The function $\mv_\Lambda(P_{j_1},P_{j_2},\cdots,P_{j_d})$ is called the \emph{mixed volume} of $P_{j_1},P_{j_2},\cdots,P_{j_d}$.
\end{thm}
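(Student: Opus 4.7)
The theorem is essentially a direct specialization of Theorem \ref{thm:mixed} together with the subsequent definition of $\mphi$ to the particular valuation $\phi = \vol_\Lambda$, so the plan is mostly to verify the hypotheses and then invoke the machinery already set up.

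First I would fix $V := \mathrm{span}(P_1+\cdots+P_k)$ (a $d$-dimensional subspace of $\R^D$ once we shift so that $0 \in V$) and the lattice $\Lambda = V \cap \Z^D$, and observe that $\vol_\Lambda$ is well-defined on $\cP_b(V)$, i.e., on integral polytopes contained in $V$. Since the Minkowski sum $t_1 P_1 + \cdots + t_k P_k$ only lies in a translate of $V$ in general, I would first translate each $P_i$ by a lattice vector $\v_i \in \Z^D$ so that the translated polytopes lie in $V$; this is harmless because $\vol_\Lambda$ is translation-invariant (in particular $\Lambda$-invariant) and because the mixed valuation built in \eqref{equ:mixed} is also translation-invariant in each slot (which follows from its definition together with the translation invariance of $\phi$).

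Next I would verify the three properties required to apply Theorem \ref{thm:mixed}: (a) $\vol_\Lambda$ is a valuation on $\cP_b(V)$ — this is the standard inclusion-exclusion identity satisfied by Lebesgue measure, extending linearly to the algebra of polytopes; (b) $\vol_\Lambda$ is $\Lambda$-invariant, which is just translation invariance of Lebesgue measure; and (c) $\vol_\Lambda$ is homogeneous of degree $d = \dim \Lambda$, i.e., $\vol_\Lambda(tP) = t^d \vol_\Lambda(P)$ for $P \in \cP_b(V)$ and $t \in \Z_{\ge 0}$, which is the usual scaling behavior of volume in dimension $d$. With these checked, Theorem \ref{thm:mixed} produces a function $\cM$ with
\[
\vol_\Lambda(t_1P_1+\cdots+t_kP_k) = \sum_{j_1,\ldots,j_d \in [d]} \cM(P_{j_1},\ldots,P_{j_d})\, t_{j_1}\cdots t_{j_d}.
\]

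Finally I would pass from $\cM$ to its symmetrization via the definition immediately following Theorem \ref{thm:mixed}, setting $\mv_\Lambda := \mathcal{M}\vol_\Lambda$. Equation \eqref{equ:mixed} then yields exactly the desired expansion, and Lemma \ref{lem:mixval} guarantees that $\mv_\Lambda$ is symmetric in its arguments and multilinear, which are the properties one expects of mixed volume. I do not anticipate any genuine obstacle — the one point that requires mild care is the translation step at the beginning (so that the ambient space for the valuation is literally $V$ rather than one of its affine translates), but once that bookkeeping is done the statement is a transparent consequence of the already-proved general theorem.
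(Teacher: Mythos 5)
Your proposal is correct and takes essentially the same route the paper does: the paper presents this theorem as an immediate specialization of Theorem \ref{thm:mixed} and the definition of $\mphi$ to the valuation $\phi=\vol_\Lambda$ (and also cites Ewald). You just spell out the routine verification of the hypotheses (valuation, $\Lambda$-invariance, degree-$d$ homogeneity) and the bookkeeping translation of the $P_i$ into $V$, which the paper leaves implicit.
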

Furthermore we have the following properties:
\begin{thm}[Theorem 4.13 of \cite{ewald}] \label{thm:mixvolprop}
Let $P_1,\cdots,P_d$ be integral polytopes. Then,
\begin{enumerate}
\item $\mv_\Lambda(P_1,\cdots, P_d) \geq 0 $
\item $\mv_\Lambda(P_1,\cdots, P_d) > 0$ if and only if each $P_i$ contains a line segment $I_i=[a_i,b_i]$ such that $b_1-a_1,\cdots,
b_d-a_d$ are linearly independent.
\end{enumerate}
\end{thm}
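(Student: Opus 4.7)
The plan is to derive both parts from the multilinearity and symmetry of $\mv_\Lambda$ (Lemma \ref{lem:mixval}), combined with an induction on $d$ for (1) and a direct parallelepiped computation for (2).

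For (1), I would induct on $d$. The base case $d=1$ is immediate since $\mv_\Lambda(P_1) = \vol_\Lambda(P_1) \ge 0$. The inductive step invokes the classical facet decomposition
\[
d \cdot \mv_\Lambda(P_1, \ldots, P_d) = \sum_{F \text{ facet of } Q} h_{P_1}(\u_F) \cdot \mv_{\Lambda_F}(P_2^{(F)}, \ldots, P_d^{(F)}),
\]
where $Q := P_2 + \cdots + P_d$, $\u_F$ is the outer normal at $F$, $h_{P_1}(\u) = \max_{\x \in P_1} \u(\x)$ is the support function of $P_1$, $P_i^{(F)}$ is the face of $P_i$ maximized by $\u_F$, and $\Lambda_F$ is the induced lattice on the span of $F$. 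Translation-invariance of $\mv_\Lambda$ in each argument (immediate since $\vol_\Lambda$ is translation-invariant and the expansion in Theorem \ref{thm:mixedvol} is coefficient-wise unique) lets us translate $P_1$ into a half-space where every $h_{P_1}(\u_F) \ge 0$, and then each summand is non-negative by the inductive hypothesis.

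For the ``if'' direction of (2), assume each $P_i \supseteq I_i = [a_i, b_i]$ with $v_i := b_i - a_i$ linearly independent. By monotonicity of mixed volume (a standard consequence of the facet formula above together with (1), applied iteratively in each coordinate), it suffices to show $\mv_\Lambda(I_1, \ldots, I_d) > 0$. After translating each $a_i$ to the origin, the Minkowski sum $t_1 I_1 + \cdots + t_d I_d$ is the parallelepiped with edges $t_1 v_1, \ldots, t_d v_d$, whose normalized volume is exactly $t_1 \cdots t_d \cdot |\det_\Lambda(v_1, \ldots, v_d)|$. Matching this against the general expansion in Theorem \ref{thm:mixedvol} and using symmetry of $\mv_\Lambda$ yields
\[
d! \cdot \mv_\Lambda(I_1, \ldots, I_d) = |\det_\Lambda(v_1, \ldots, v_d)| > 0.
\]
For the ``only if'' direction I would argue contrapositively: if no such independent transversal of segments exists then, by a Hall/Rado-type matroid argument applied to the direction spaces $V_i := \mathrm{span}(P_i - P_i)$, some subset $S \subseteq [d]$ satisfies $\dim\bigl(\sum_{i \in S} V_i\bigr) < |S|$. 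Using multilinearity of $\mv_\Lambda$ to expand $\vol_\Lambda(\sum t_i P_i)$ as a sum over choices of one vertex-difference per $P_i$, and tracking which monomials $t_1 \cdots t_d$ can pick up contributions, one shows that the dimensional deficiency on $S$ forces that coefficient to vanish.

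The main obstacle will be the ``only if'' direction of (2): converting the combinatorial Hall-type condition on direction spaces into the vanishing of one specific polynomial coefficient requires careful bookkeeping. Since this theorem is cited from \cite{ewald}, a practical alternative is simply to invoke the classical Alexandrov--Fenchel / Minkowski characterization of positive mixed volumes proved there.
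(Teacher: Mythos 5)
The paper does not prove this theorem: it is stated as a direct citation of Theorem 4.13 in Ewald's book, and the result is used as a black box (to deduce Lemma \ref{lem:voledges}). So there is no in-paper proof to compare against. Your sketch follows the standard textbook route for the positivity criterion of mixed volumes (induction via the facet-decomposition formula for part (1), monotonicity plus the parallelepiped computation for the ``if'' of (2), and a Rado-type transversal argument for the ``only if''), and the structure is sound. Two points are worth tightening. First, the paper's $\mv_\Lambda$ is defined only for integral polytopes via Theorem \ref{thm:mixed}, whereas the segments $I_i$ in the statement need not be lattice segments; to stay inside the paper's framework you should either pass to the general real-valued mixed volume of convex bodies (which is what Ewald does, with $\mv_\Lambda$ a rescaling of it) or observe that the Rado argument on the rational direction spaces $V_i = \mathrm{span}(P_i - P_i)$ lets you choose the transversal vectors $v_i$ to be lattice vectors, so the $I_i$ may be taken integral. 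Second, as you acknowledge, the ``only if'' direction is only a plan. The clean way to finish it is a Fubini argument: if $S \subseteq [d]$ has $W := \sum_{i\in S} V_i$ with $\dim W < |S|$, translate so each $P_i$ with $i \in S$ sits in $W$, project along $W$ to a complement $W'$, and note that the fibre volumes of $\sum_i t_i P_i$ over $W'$ have degree at most $\dim W < |S|$ in the variables $(t_i)_{i\in S}$; integrating over $W'$ shows the coefficient of $\prod_{i\in S} t_i$ in $\vol_\Lambda(\sum_i t_i P_i)$ vanishes, hence $\mv_\Lambda(P_1,\dots,P_d) = 0$ after invoking part (1) to kill the other nonnegative terms appearing in that coefficient.
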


The lattice point, or counting, valuation $\Lat$ is not homogeneous. However it can be decomposed into homogeneous parts.
\begin{thm}[Theorem 5 of \cite{mcmullen}] \label{thm:decomposeLat}
Suppose $V$ is $d$-dimensional. Then we can decompose the valuation $\Lat$ as
\[
\Lat = \Lat^d + \cdots +\Lat^1 + \Lat^0
\]
where $\Lat^r$ is homogeneous of degree $r$.
\end{thm}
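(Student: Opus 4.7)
The plan is to read off the homogeneous pieces of $\Lat$ directly from Ehrhart polynomials. For each integral polytope $P \subset V$, Ehrhart's Theorem \ref{thm:ehrhart0} says $\Lat(tP)$ is a polynomial in $t$ of degree at most $d$; define $\Lat^r(P)$ to be its coefficient of $t^r$. Setting $t=1$ recovers $\Lat(P) = \Lat^d(P) + \cdots + \Lat^0(P)$, so once we know each $\Lat^r$ is a valuation and is homogeneous of degree $r$, we are done.

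Homogeneity is immediate from uniqueness of polynomial representation. For any $s \in \Z_{\ge 0}$,
\[
\sum_{r=0}^d \Lat^r(sP)\,t^r \;=\; \Lat(t(sP)) \;=\; \Lat((st)P) \;=\; \sum_{r=0}^d \Lat^r(P)\,s^r t^r,
\]
and matching coefficients of $t^r$ gives $\Lat^r(sP) = s^r \Lat^r(P)$. For the valuation inclusion-exclusion identity, suppose $P, Q, P\cup Q, P\cap Q$ are all integral polytopes. Since $\Lat$ is a valuation, $\Lat(tP) + \Lat(tQ) = \Lat(t(P\cup Q)) + \Lat(t(P\cap Q))$ for every $t \in \Z_{\ge 0}$. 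Each side is a polynomial in $t$, and polynomials agreeing on $\Z_{\ge 0}$ agree identically; comparing coefficients of $t^r$ yields the corresponding identity for $\Lat^r$.

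The main obstacle is upgrading this pairwise inclusion-exclusion for $\Lat^r$ on polytopes to a genuine valuation on the whole algebra $\cP_b(V)$, i.e., a linear functional on the vector space spanned by indicator functions of polytopes. I would address this either by appealing to an extension theorem of Groemer--Volland type, which promotes any function on polytopes obeying the pairwise inclusion-exclusion identity to a well-defined linear map on $\cP_b(V)$, or more directly by invoking McMullen's polynomial theorem for $\Z^D$-invariant valuations: for any integral polytopes $P_1, \dots, P_k$, $\Lat(t_1 P_1 + \cdots + t_k P_k)$ is a polynomial in $t_1, \dots, t_k$, and its homogeneous degree-$r$ component, viewed as a function of the $P_i$ via Minkowski combinations, realizes $\Lat^r$ as the degree-$r$ piece of a valuation, hence automatically respecting every linear relation among polytopal indicators. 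The delicate point is essentially to certify that the pointwise homogeneous decomposition is compatible with \emph{all} linear dependencies of indicator functions and not merely the two-term ones, and this is precisely what the extension (or polynomial) theorem supplies.
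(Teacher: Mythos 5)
The paper offers no proof of this statement --- it is quoted verbatim as Theorem~5 of McMullen's paper and used as a black box --- so there is no internal argument to compare yours against. That said, your reconstruction is sound. Defining $\Lat^r(P)$ as the coefficient of $t^r$ in the Ehrhart polynomial of $P$, reading off the decomposition at $t=1$, and deducing homogeneity by substituting $t \mapsto st$ and comparing coefficients are all exactly the right moves, and the pairwise inclusion--exclusion identity for $\Lat^r$ does follow from that of $\Lat$ by comparing polynomial coefficients as you say. You are also right to flag the genuine subtlety: pairwise additivity on lattice polytopes does not by itself hand you a linear functional on the span of indicator functions, and closing that gap requires either a Groemer--Volland-type extension theorem adapted to the lattice polytope algebra or McMullen's own polytope-algebra machinery (the source this theorem is quoted from). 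Two refinements worth recording: first, $\Lat^r$ is only naturally defined on \emph{lattice} polytopes, so the ``valuation'' in the conclusion should be understood as a $\Lambda$-valuation on the subalgebra of $\cP_b(V)$ generated by indicator functions of lattice polytopes, not on all of $\cP_b(V)$; second, the pairwise identity you derive only holds when $P$, $Q$, $P\cup Q$, $P\cap Q$ are all lattice polytopes, and the classical Groemer theorem (stated for arbitrary convex bodies) needs the lattice-specific variant to apply. Neither caveat undermines the argument; they are the precise hypotheses under which the cited extension results operate, so the proposal is essentially a correct proof outline, honest about where the heavy lifting is outsourced.
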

This decomposition corresponds to the coefficients of the Ehrhart polynomial, in particular $\Lat^d$ corresponds to the volume valuation $\vol_\Lambda,$ where $\Lambda = V \cap \Z^D$.
Applying Theorem \ref{thm:mixed} and Lemma \ref{lem:mixval} to each homogeneous function $\Lat^r$ gives us the following result:

\begin{thm}\label{thm:mixedLat}
Suppose $P_1,\cdots, P_k$ are integral polytopes with $\dim(P_1+\cdots +P_k)=d$. Then \[
\Lat(t_1P_1+t_2P_2+\cdots+t_kP_k) = \sum_{e=0}^d \sum_{j_1,\cdots,j_e =1}^k \ml^e(P_{j_1},P_{j_2},\cdots,P_{j_e})t_{j_1}\cdots t_{j_e}.
\]
\end{thm}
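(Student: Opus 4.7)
The plan is to derive this as a direct consequence of Theorem~\ref{thm:decomposeLat} combined with Theorem~\ref{thm:mixed} and the definition of the symmetrized mixed valuation. Let $V = \mathrm{span}(P_1 + \cdots + P_k)$ and $\Lambda = V \cap \Z^D$, so that $\Lambda$ is $d$-dimensional. Restricting $\Lat$ to bounded polytopes contained in translates of $V$, Theorem~\ref{thm:decomposeLat} gives a decomposition $\Lat = \Lat^d + \cdots + \Lat^1 + \Lat^0$ into valuations $\Lat^e$ on $\cP_b(V)$ that are homogeneous of degree $e$.

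The only hypothesis I need to verify before invoking Theorem~\ref{thm:mixed} on each $\Lat^e$ is that $\Lat^e$ is a $\Lambda$-valuation. For any integral polytope $P \subset V$ and any $\lambda \in \Lambda$, one has $\Lat(t(P+\lambda)) = \Lat(tP + t\lambda) = \Lat(tP)$ for every $t \in \Z_{\ge 0}$, since $t\lambda \in \Lambda \subseteq \Z^D$ and $\Lat$ is $\Z^D$-invariant. Hence $P$ and $P+\lambda$ share the same Ehrhart polynomial in $t$, and since $\Lat^e(Q)$ is the degree-$e$ piece extracted from $t \mapsto \Lat(tQ)$, each $\Lat^e$ inherits $\Lambda$-invariance from $\Lat$.

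With this in hand, for each $0 \le e \le d$ I apply Theorem~\ref{thm:mixed} to $\Lat^e$ (with $e$ playing the role of $d$ in that theorem) to obtain a function $\cM_e$ on $e$-tuples of integral polytopes satisfying \eqref{equ:mixed0}, and then let $\ml^e := \mathcal{M}\Lat^e$ denote its symmetrization. By the definition of $\mphi$ and Lemma~\ref{lem:mixval}, equation \eqref{equ:mixed} applied to $\Lat^e$ gives
\[
\Lat^e(t_1 P_1 + \cdots + t_k P_k) \;=\; \sum_{j_1, \ldots, j_e} \ml^e(P_{j_1}, \ldots, P_{j_e})\, t_{j_1} \cdots t_{j_e}.
\]
Summing over $e$ from $0$ to $d$ and using $\Lat = \sum_{e=0}^d \Lat^e$ produces the claimed identity. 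The main (and essentially only) point requiring a short verification is the $\Lambda$-invariance of each homogeneous component $\Lat^e$; the remainder is a routine assembly of the previously established structural results.
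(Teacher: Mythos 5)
Your proposal is correct and follows exactly the route the paper indicates: decompose $\Lat$ into homogeneous pieces via Theorem~\ref{thm:decomposeLat}, apply Theorem~\ref{thm:mixed} and Lemma~\ref{lem:mixval} to each $\Lat^e$, and sum. The paper states this as a one-line derivation; your additional check that each $\Lat^e$ inherits $\Lambda$-invariance from $\Lat$ is a correct and reasonable elaboration of what the authors leave implicit.
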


We cannot expect $\ml^r$ or any other mixed valuation $\mphi$ to be nonnegative in general. However we have a way to compute them. 

\begin{thm}\label{thm:mobius}
Suppose $\phi$ is a homogeneous $\Lambda$-valuation on $\cP_b(V)$ of degree $d.$ For any integral polytopes $P_1,P_2,\cdots, P_d \subset V,$ we have
\[
d! \mphi(P_1,P_2,\cdots, P_d) = \sum_{J\subseteq [d]} \left(-1\right)^{d-|J|} \phi\left(\sum_{j\in J}P_j\right)
\]
\end{thm}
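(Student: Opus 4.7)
The plan is to apply inclusion--exclusion (Möbius inversion on the Boolean lattice $2^{[d]}$) to the polynomial identity \eqref{equ:mixed}, and then collapse the resulting sum using the symmetry property \eqref{equ:permprop} of $\mphi$. This is a standard polarization argument recovering the ``diagonal'' coefficient of a homogeneous polynomial.

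First I would specialize \eqref{equ:mixed} with $k = d$ by setting $t_j = 1$ for $j \in J$ and $t_j = 0$ for $j \notin J$, for each subset $J \subseteq [d]$. Since $\sum_j t_j P_j = \sum_{j \in J} P_j$ (using $0 \cdot P = \{0\}$), this yields
\[
\phi\Big( \sum_{j \in J} P_j \Big) \;=\; \sum_{(j_1, \dots, j_d) \in J^d} \mphi(P_{j_1}, \dots, P_{j_d}).
\]
The edge case $J = \emptyset$ gives $\phi(\{\mathbf{0}\}) = 0$, which follows from homogeneity of degree $d \geq 1$ (apply $\phi([2 \cdot \{\mathbf{0}\}]) = 2^d \phi([\{\mathbf{0}\}])$).

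Next I would substitute this expression into the right-hand side of the claimed formula and interchange the order of summation:
\[
\sum_{J \subseteq [d]} (-1)^{d-|J|} \phi\Big( \sum_{j \in J} P_j \Big) \;=\; \sum_{(j_1, \dots, j_d) \in [d]^d} \mphi(P_{j_1}, \dots, P_{j_d}) \cdot c(j_1, \dots, j_d),
\]
where, writing $S = \{j_1, \dots, j_d\}$, the coefficient is
\[
c(j_1, \dots, j_d) \;=\; \sum_{J \supseteq S} (-1)^{d - |J|} \;=\; \sum_{J' \subseteq [d] \setminus S} (-1)^{d - |S| - |J'|}.
\]
The key combinatorial point is that this inner sum vanishes whenever $[d] \setminus S \neq \emptyset$ (by the standard identity $\sum_{J'} (-1)^{|J'|} = 0$ over a nonempty set), and equals $1$ precisely when $S = [d]$. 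I expect this cancellation to be the main step to verify carefully, but it is entirely routine.

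Finally, the surviving tuples $(j_1, \dots, j_d)$ are exactly the permutations of $[d]$, and by the symmetry property \eqref{equ:permprop} each of the $d!$ such tuples contributes the same value $\mphi(P_1, \dots, P_d)$. Summing gives $d! \, \mphi(P_1, \dots, P_d)$, which matches the left-hand side of the claimed formula.
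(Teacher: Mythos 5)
Your proposal is correct and is essentially the same argument as the paper's: the paper packages the inclusion--exclusion step as a formal application of M\"obius inversion on the Boolean lattice (introducing $f(T)=\phi(\sum_{i\in T}P_i)$ and $g(T)=\sum_{\cup\{j_i\}=T}\mphi(P_{j_1},\ldots,P_{j_d})$, showing $f=\sum_{S\subseteq T}g$, inverting, and evaluating at $T=[d]$), whereas you carry out the same inversion by hand by interchanging sums and observing that the coefficient $\sum_{J\supseteq S}(-1)^{d-|J|}$ vanishes unless $S=[d]$. Both proofs rest on the same two ingredients (specializing \eqref{equ:mixed} at $t_j\in\{0,1\}$ and the symmetry \eqref{equ:permprop}), and your explicit check that $\phi(\{\mathbf{0}\})=0$ handles the $J=\emptyset$ term that the paper's argument uses implicitly.
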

\begin{proof}
  We define two functions $f$ and $g$ on the Boolean algebra of order $d$. (See \cite[Chapter 3]{enum}.) For any subset $T \subseteq [d],$ let 
\[
  f(T) := \phi\left(\sum_{i\in T}P_i\right) \quad \text{and} \quad g(T) := \sum_{\substack{j_1, \dots, j_d \in [d] \\ \cup_{i=1}^d \{j_i\} = T}} \mphi \left(P_{j_1}, P_{j_2},\cdots, P_{j_d}\right).
\]
Apply \eqref{equ:mixed} with $t_i=1$ to $f(T),$ one sees that $f(T) = \sum_{S \subseteq T} g(S).$ Therefore, by Mobius inversion, we get
\[ g(T) = \sum_{S \subseteq T} (-1)^{|T|-|S|} f(S).\]
Then the theorem follows from evaluating the above equality at $T=[d].$
\end{proof}

\section{McMullen's formula and the BV-$\alpha$-valuation}\label{sec:exterior}
Recall that in the introduction we discuss the existence of the following \emph{McMullen's formula}: 
\begin{equation}\label{equ:exterior1}
\Lat(P) = \displaystyle \sum_{F: \textrm{ a face of $P$}} \alpha(F,P)
\ \nvol(F), \end{equation}
where $\alpha(F,P)$ depends only on the normal cone of $P$ at $F$.

One immediate consequence of the existence of McMullen's formula \eqref{equ:exterior1} is that it provides another way to prove Ehrhart's theorem. Moreover, it gives a description of each Ehrhart coefficient. We state the following modified version of Theorem \ref{thm:ehrhart0}.

\begin{thm}\label{thm:ehrhart} 
  For an integral polytope $P \subset \Z^D$ and any $t \in \Z_{\ge 0}$, the function
$i(P, t) =  \Lat(t P) = | tP \cap \Z^n|$
is a polynomial in $t$ of degree $\dim P.$ Furthermore, the coefficient of $t^k$ in $i(P,t)$ is given by
\begin{equation}\label{equ:coeff}
  \sum_{F: \text{ a $k$-dim face of $P$}} \alpha(F, P) \ \nvol(F).
\end{equation}
\end{thm}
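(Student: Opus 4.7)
The plan is to apply McMullen's formula \eqref{equ:exterior1} to the dilated polytope $tP$ for each $t \in \Z_{\geq 0}$ and then collect terms by dimension.

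First, I would observe that positive dilation preserves the normal fan of $P$: for $t > 0$, every face of $tP$ has the form $tF$ for a unique face $F$ of $P$, and the normal cone of $tP$ at $tF$ equals the normal cone of $P$ at $F$. Since $\alpha(F, P)$ depends only on this normal cone, it follows that $\alpha(tF, tP) = \alpha(F, P)$. Combined with the elementary scaling relation $\nvol(tF) = t^{\dim F}\,\nvol(F)$, applying \eqref{equ:exterior1} to $tP$ yields
\[
\Lat(tP) \;=\; \sum_{F : \text{ a face of } P} \alpha(F, P)\, \nvol(F)\, t^{\dim F}
\]
for all $t \in \Z_{>0}$. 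This is manifestly a polynomial expression in $t$, and regrouping by $\dim F = k$ produces precisely the coefficient formula \eqref{equ:coeff}. The case $t = 0$ is handled separately and reduces to the trivial identity $\Lat(\{\0\}) = 1$, which agrees with the constant term of the right-hand side.

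For the degree statement, the coefficient of $t^{\dim P}$ arises from the single face $F = P$ and equals $\alpha(P,P)\,\nvol(P)$. Since $P$ is integral and has full dimension in its affine span, $\nvol(P) > 0$, so it only remains to observe that $\alpha(P, P) = 1$. This is the standard normalization attached to the trivial normal cone $\{\0\}$ and is built into any $\alpha$ satisfying \eqref{equ:exterior1} (in particular the BV-$\alpha$-valuation); it can be pinned down independently by applying \eqref{equ:exterior1} to a single lattice point.

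The main obstacle is essentially bookkeeping rather than mathematical depth: once the invariance $\alpha(tF, tP) = \alpha(F, P)$ and the homogeneity of $\nvol$ are in hand, the polynomiality and the coefficient description follow directly from \eqref{equ:exterior1}. The only mildly delicate item is confirming the normalization $\alpha(P, P) = 1$, which is routine but worth stating explicitly so that the leading coefficient of $i(P, t)$ is nonzero and the degree is exactly $\dim P$.
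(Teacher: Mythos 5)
Your proof is correct and follows essentially the same approach as the paper: apply McMullen's formula \eqref{equ:exterior1} to $tP$, use that $\alpha(tF,tP)=\alpha(F,P)$ because normal cones are invariant under dilation, and use $\nvol(tF)=t^{\dim F}\nvol(F)$ to read off the coefficient of $t^k$. You are a bit more explicit than the paper about the $t=0$ case and about pinning down $\alpha(P,P)=1$ (which the paper defers to Lemma \ref{lem:sum}) to certify that the degree is exactly $\dim P$, but these are refinements of the same argument rather than a different route.
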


\begin{proof}
The desired formula follows from applying McMullen's formula to $tP$ and observing that $\alpha(tF, tP) =\alpha(F,P).$
\end{proof}

Formula \eqref{equ:coeff} for the coefficients of the Ehrhart polynomial $i(P,t)$ gives a sufficient condition for Ehrhart positivity. 
\begin{lem}\label{lem:red1}
Let $P$ be an integral polytope. For a fixed $k,$ if $\alpha(F, P)$ is positive for any $k$-dimensional face of $P,$ then the coefficient of $t^k$ in the Ehrhart polynomial $i(P,t)$ of $P$ is positive.

Hence, if $\alpha(F,P)>0$ for every face $F$ of $P$, then $P$ is Ehrhart positive.
\end{lem}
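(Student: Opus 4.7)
The plan is to derive this as an essentially immediate consequence of Theorem \ref{thm:ehrhart}, which already gives the explicit expression
\[
[t^k]\, i(P,t) \;=\; \sum_{F:\ k\text{-dim face of }P} \alpha(F,P)\,\nvol(F)
\]
for the coefficient of $t^k$. So the entire argument is about showing this sum is a strictly positive real number under the hypothesis, and then iterating over $k$ for the second statement.

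First I would record the two positivity ingredients that enter each summand. The factor $\nvol(F)$ is strictly positive for every face $F$ of an integral polytope $P$, because every face of an integral polytope is itself an integral (hence full-dimensional inside its own affine span) polytope, and the normalized volume of a full-dimensional integral polytope is a positive integer. In particular, for a vertex we use the convention $\nvol(\text{point})=1$, which is consistent with Theorem \ref{thm:ehrhart} giving constant term equal to $1$. The factor $\alpha(F,P)$ is positive by hypothesis for every $k$-dimensional face $F$.

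Next I would note that the sum is nonempty: for every $0\le k\le \dim P$, the polytope $P$ has at least one $k$-dimensional face (a standard fact about polytopes, e.g.\ by taking iterated faces of a flag). Therefore the expression $\sum_{F} \alpha(F,P)\,\nvol(F)$ is a nonempty sum of strictly positive terms, hence strictly positive. This proves the first assertion.

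For the second assertion, assume $\alpha(F,P)>0$ for \emph{every} face $F$ of $P$. Then the hypothesis of the first part is satisfied for every $k$ with $0\le k\le \dim P$, so every coefficient of $i(P,t)$ is positive; that is, $P$ is Ehrhart positive. There is no serious obstacle here — the content of the lemma is packaged entirely inside Theorem \ref{thm:ehrhart}, and the only subtlety worth flagging explicitly in the write-up is the convention that $\nvol$ of a $0$-dimensional integral polytope equals $1$, together with the observation that faces of integral polytopes remain integral.
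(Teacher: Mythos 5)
Your proof is correct and matches the paper's (implicit) argument: the lemma is stated without proof precisely because it follows at once from Theorem \ref{thm:ehrhart} by the observations you make — that $\nvol(F)>0$ for every face of an integral polytope, that every dimension $0\le k\le\dim P$ is realized by some face, and hence each Ehrhart coefficient is a nonempty sum of positive terms.
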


As discussed in the introduction, different constructions of $\alpha(F, P)$ were given in the literature. In our paper, we will use Berline-Vergne's construction, which we refer to as the \emph{BV-$\alpha$-valuation}.

\subsection{Berline-Vergne's construction} \label{subsec:BV}
For any rational quotient subspace $W$ of $\R^D,$ Berline and Vergne construct in \cite{localformula} a function $\Psi_W([C])$ on indicator functions of rational cones $C$ in $W$ with the following properties:

\begin{enumerate}[(P1)]
  \item $\Psi_W( \cdot)$ is a valuation on the algebra of rational cones in $W$. 
  \item McMullen's formula \eqref{equ:exterior1} holds for integral polytopes in $\R^D$ if we set 
  \begin{equation}\label{equ:defnalpha}
	  \alpha(F,P) := \Psi_{\R^D/\lin(F)}([ (\ncone(F,P))^\circ ]).
\end{equation}
\item If a cone $C$ contains a line, then $\Psi_W([C])=0$.
\item Its value on the zero-dimensional cone $\0$ is 1, i.e. $\Psi_W([\0])=1$.
\item $\Psi_W$ is invariant under the action of $O_D(\Z)$, the group of orthogonal unimodular transformations. More precisely, if $T$ is an orthogonal unimodular transformation, for any cone $C,$ we have $\Psi_W([C]) = \Psi_{T(W)}([T(C)]).$
 
\item Let $L$ be the orthogonal complement of $\lin(C)$. Then $\Psi_W([C]) = \Psi_{\R^D/L}([C]).$ (This indicates that we may omit the subscript $W$ in $\Psi_W.$)
\end{enumerate}


It is important to remark that their main construction is actually a valuation with values on certain classes of holomorphic functions, and the function $\Psi_W$ we described above comes from the (non-trivial) evaluation at zero. For more details we refer the reader to the original paper \cite{bvtodd} and to the exposition in \cite[Chapters 19-20]{barvinok}.

\subsection{Reduction theorem}
We've already discussed a consequence of the existence of McMullen's formula, which reduces the problem of proving Ehrhart positivity to proving $\alpha$-positivity. Now we will discuss a very important consequence of the BV-$\alpha$-valuation -- the reduction theorem -- applying which we complete the proof of Theorem \ref{thm:reduction}. 

For the rest of the section, we assume $\alpha(F,P)$ comes from the BV-$\alpha$-valuation, and take \eqref{equ:defnalpha} as the definition of $\alpha(F,P).$
We start with the following preliminary lemma.
\begin{lem}
Suppose $V$ is a subspace of $\R^D$, and $P$ and $Q$ are two integral polytopes in $V.$
  Let $F$ be a face of $P.$ Suppose there exist faces $G_1, G_2, \dots, G_r$ of $Q$ of the same dimension such that
  \begin{equation}\label{equ:unionncone}
    \ncone_V(F, P) = \cup_{i=1}^r \ncone_V(G_i, Q).
\end{equation}
Then $F$ is of the same dimension as $G_i$'s, and 
$\ds \alpha(F, P) = \sum_{i=1}^r \alpha(G_i, Q).$
\end{lem}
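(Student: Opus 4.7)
The strategy is to convert the statement into an indicator-function identity on pointed feasible cones and then apply the valuation property of $\Psi$. First, from \eqref{equ:nconedim} applied to the union $\ncone_V(F,P) = \bigcup_i \ncone_V(G_i, Q)$, together with each $\ncone_V(G_i, Q)$ being full-dimensional inside this union, one deduces $\dim F = \dim G_i$. A span argument via Lemma \ref{lem:ncone} further shows that $F$ and each $G_i$ share a common linear span $L$ (after translation to the origin): since $\ncone_V(G_i,Q) \subseteq \ncone_V(F,P) \subseteq L^\perp$ and each $\ncone_V(G_i, Q)$ spans $L_i^\perp$, we have $L \subseteq L_i$, and equal dimension forces equality. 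Thus all pointed feasible cones $\fcone^p(G_i, Q)$ and $\fcone^p(F, P)$ live in the common quotient $V/L$ with common lattice $\Lambda = (V \cap \Z^D)/L$.

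Writing $N := \ncone_V(F,P)$ and $N_i := \ncone_V(G_i, Q)$, polarity \eqref{equ:fncone} combined with $(\bigcup_i K_i)^\circ = \bigcap_i K_i^\circ$ yields $\fcone^p(F, P) = N^\circ = \bigcap_i N_i^\circ = \bigcap_i \fcone^p(G_i, Q)$. The crux is then to establish the identity
\[
  [N^\circ] \;\equiv\; \sum_{i=1}^r [N_i^\circ] \pmod{\text{polyhedra with lines}},
\]
for then applying $\Psi(\cdot, \Lambda)$ via Corollary \ref{cor:sum}, together with definition \eqref{equ:defnalpha1} of $\alpha$, yields $\alpha(F, P) = \sum_i \alpha(G_i, Q)$.

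The engine for this identity is the two-cone case: if $N_1, N_2$ meet along a codimension-one wall in a hyperplane $H$ through the origin with unit normal $\nu$, then the identity $(N \cap H^\pm)^\circ = N^\circ + (H^\pm)^\circ$ gives $N_j^\circ = N^\circ + \R_{\ge 0}(\mp \nu)$, so that $N_1^\circ \cup N_2^\circ = N^\circ + \R\nu$ contains the line $\R\nu$ and is therefore a polyhedron with lines. Inclusion-exclusion then yields $[N_1^\circ] + [N_2^\circ] = [N^\circ] + [N_1^\circ \cup N_2^\circ] \equiv [N^\circ]$ modulo polyhedra with lines. For $r \ge 3$, one natural route is induction: since $\{N_i\}_i$ is a regular subdivision of $N$ (being a subfan of the normal fan of the integral polytope $Q$), one can produce a hyperplane through the origin partitioning $\{N_i\}_i$ into two convex unions $N^+, N^-$, apply the two-cone case to $(N^+, N^-)$, and then use the inductive hypothesis on each half. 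The main technical obstacle is producing this separating hyperplane; a cleaner alternative is to establish the identity directly via the face-lattice formula $[N^\circ] = \sum_{\sigma \in \Sigma^{\mathrm{int}}} (-1)^{d - \dim \sigma} [\sigma^\circ]$ (summed over interior cones of the subdivision, $d = \dim N$), noting that each interior cone $\sigma$ of dimension less than $d$ has polar $\sigma^\circ$ of positive-dimensional lineality by Lemma \ref{lem:ncone} and hence contributes zero modulo polyhedra with lines.
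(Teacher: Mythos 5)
Your strategy coincides with the paper's: reduce to an indicator-function identity on pointed feasible cones in the algebra $\cC_\0(V/L)$, then apply $\Psi(\cdot,\Lambda)$ via Corollary \ref{cor:sum}. Your opening dimension-and-span argument is correct and matches the paper's. The divergence is in how the identity
\[
  [\fcone^p(F,P)] \equiv \sum_{i=1}^r [\fcone^p(G_i,Q)] \pmod{\text{polyhedra with lines}}
\]
is obtained. The paper notes the (easy) identity $[\ncone(F,P)]\equiv\sum_i[\ncone(G_i,Q)]$ modulo lower-dimensional cones, and then simply ``takes the polar.'' This step relies on the classical fact, due to Lawrence, that the polarity map $[C]\mapsto[C^\circ]$ extends to a valuation on the algebra of cones and that it sends lower-dimensional cones to cones with lines. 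You instead attempt to establish the polarized identity directly, which is where a genuine gap appears.

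You completely verify only the case $r=2$ (two cones meeting along a wall). For $r\ge3$, your first route --- induction via a hyperplane through the origin splitting $\{N_i\}$ into two convex halves --- hinges on an unestablished step: a full-dimensional subfan of the normal fan of $Q$ covering a convex cone $N$ need not admit such a separating hyperplane, and you acknowledge this. Your ``cleaner alternative'' is the unproven formula $[N^\circ]=\sum_{\sigma\in\Sigma^{\mathrm{int}}}(-1)^{d-\dim\sigma}[\sigma^\circ]$. That formula is in fact correct, but the most natural way to derive it is precisely to start from $[N]=\sum_{\sigma\in\Sigma^{\mathrm{int}}}(-1)^{d-\dim\sigma}[\sigma]$ (an Euler-characteristic computation on the stars of interior cones) and then apply Lawrence's polar-valuation theorem --- the very fact you were trying to avoid. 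Thus as written the argument is circular or incomplete: either cite the polar valuation theorem and argue as the paper does (shorter and cleaner), or give a self-contained proof of your face-lattice identity (a genuine, nontrivial task). Your observation that $\fcone^p(F,P)=\bigcap_i\fcone^p(G_i,Q)$ set-theoretically is correct but does not by itself give the indicator identity, which must track overcounting on overlaps.
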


\begin{proof}
  The first consequence of Equation \eqref{equ:unionncone} is that $\ncone_V(F,P)$ and $\ncone_V(G_i, Q)$'s all span the same subspace, say $W^*$. Let $L$ be the orthogonal complement of $W$ with respect to $V.$ Then by Lemma \ref{lem:ncone}/a), 
  \[\lin(F) = L = \lin(G_i) \quad \text{for each $i.$}\]
  Hence, $F$ has the same dimension as $G_i$'s. 
%
Next, since $\ncone_V(G_i, Q) \cap \ncone_V(G_j, Q)$ is a lower dimensional cone for any $i \neq j,$ we have 
  \[ [ \ncone_V(F, P) ] \equiv \sum_{i=1}^r [ \ncone_V(G_i, Q)] \quad \text{modulo polyhedra contained in proper subspaces of $W$}.\]
  Taking the polar of the above identity with respect to $W$ and applying Lemma \ref{lem:ncone}/b) yields 
  \[ [ \ncone(F, P) ]^\circ \equiv \sum_{i=1}^r [ \ncone(G_i, Q)]^\circ \quad \text{modulo polyhedra with lines}.\]
  Then our desired identity follows from Properties (P1) and (P3) of the Berline-Vergne construction.
\end{proof}
Our reduction theorem is a clear consequence of the above lemma.
\begin{thm}[Reduction Theorem] \label{thm:reduction-gen}
Suppose $V$ is a subspace of $\R^D,$ and 
$\lin(P)$ and $\lin(Q)$ are both subspaces of $V.$
Assume further the normal fan $\Sigma_V(P)$ of $P$ with respect to $V$ is a refinement of the normal fan $\Sigma_V(Q)$ of $Q$ with respect to $V$. 
Then for any fixed $k,$ if $\alpha(F, P) >0 $ for every $k$-dimensional face $F$ of $P$, then $\alpha(G, Q) >0$ for every $k$-dimensional face $G$ of $Q.$

Therefore, BV-$\alpha$-positivity of $P$ implies BV-$\alpha$-positivity of $Q.$
\end{thm}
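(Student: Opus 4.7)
The plan is to apply the lemma immediately preceding the Reduction Theorem (the one giving $\alpha(F,P) = \sum_i \alpha(G_i, Q)$ whenever a normal cone decomposes as a union of equidimensional normal cones) after swapping the roles of $P$ and $Q$. Fix a $k$-dimensional face $G$ of $Q$. By \eqref{equ:nconedim}, $\dim \ncone_V(G, Q) = \dim V - k$. Because $\Sigma_V(P)$ refines $\Sigma_V(Q)$ and both are complete fans in $V$, the cone $\ncone_V(G, Q)$ is a union of normal cones $\ncone_V(F, P)$ as $F$ ranges over the faces of $P$ whose normal cones lie inside $\ncone_V(G, Q)$.

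Next I would isolate the maximal cones of this decomposition. Let $F_1, \dots, F_r$ denote those faces of $P$ with $\ncone_V(F_i, P) \subseteq \ncone_V(G, Q)$ and $\dim \ncone_V(F_i, P) = \dim \ncone_V(G, Q)$. By \eqref{equ:nconedim} applied to $P$, this last condition forces $\dim F_i = k$ for every $i$, so the $F_i$ are equidimensional. Every point of $\ncone_V(G, Q)$ lies in some cone of the refining fan, and hence in the closure of some full-dimensional piece, so
\[
\ncone_V(G, Q) = \bigcup_{i=1}^{r} \ncone_V(F_i, P),
\]
with $r \geq 1$ since $\ncone_V(G, Q)$ is nonempty.

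Applying the preceding lemma (with $Q$ playing the role of $P$ and the $F_i$ playing the role of the $G_i$) yields
\[
\alpha(G, Q) = \sum_{i=1}^{r} \alpha(F_i, P).
\]
If every $k$-dimensional face $F$ of $P$ satisfies $\alpha(F, P) > 0$, then each summand on the right is strictly positive, and since $r \geq 1$ we conclude $\alpha(G, Q) > 0$. This establishes the first assertion; the second, that BV-$\alpha$-positivity of $P$ implies BV-$\alpha$-positivity of $Q$, follows by letting $k$ vary over $0, 1, \dots, \dim V$.

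The only real subtlety is step two, namely verifying that the faces $F_i$ appearing in the cover are automatically of dimension $k$. This is where \eqref{equ:nconedim} does the work: refinement of fans ensures a set-theoretic equality of cones, and the dimension formula then transfers this into a matching of face dimensions, which is exactly the hypothesis required by the preceding lemma. Once this matching is in place, the proof is just a one-line application of the lemma and positivity of a nonempty sum of positive terms.
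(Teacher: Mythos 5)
Your proof is correct and is exactly the argument the paper intends: the Reduction Theorem is stated with no explicit proof, immediately after the lemma on unions of normal cones, and the reader is expected to supply the decomposition of $\ncone_V(G,Q)$ into maximal cones of the refining fan $\Sigma_V(P)$ and then apply the lemma with the roles of $P$ and $Q$ interchanged, precisely as you do. The dimension bookkeeping via \eqref{equ:nconedim} to verify the $F_i$ are equidimensional of dimension $k$ is the key detail, and you handle it correctly.
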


Theorem \ref{thm:reduction-gen} and Proposition \ref{prop:coarser} immediately give the following result and complete the proof for Theorem \ref{thm:reduction}
\begin{thm}[Reduction Theorem, special form]\label{thm:reduction-spe}
Let $Q \subset \R^{n+1}$ be a generalized permutohedron. 
Then for any fixed $k,$ if $\alpha(F, \Pi_{n}) >0 $ for every $k$-dimensional face $F$ of $\Pi_{n}$, then $\alpha(G, Q) >0$ for every $k$-dimensional face $G$ of $Q.$

Therefore, BV-$\alpha$-positivity of $\Pi_{n}$ implies BV-$\alpha$-positivity of $Q.$\end{thm}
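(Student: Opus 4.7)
The plan is to deduce Theorem \ref{thm:reduction-spe} as an immediate specialization of the general reduction theorem (Theorem \ref{thm:reduction-gen}), using the identification of the braid arrangement fan $\B_n$ with the normal fan of $\Pi_n$ supplied by Proposition \ref{prop:coarser}.

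First I would fix the ambient subspace $V = \{(x_1,\dots,x_{n+1}) \in \R^{n+1} : x_1 + \cdots + x_{n+1} = 0\}$ and verify that both $\Pi_n$ and $Q$ lie in translates of $V$. For $\Pi_n$ this holds because every permutation of $(1,2,\dots,n+1)$ has the same coordinate sum $\binom{n+2}{2}$; for a generalized permutohedron $Q$ it is part of the setup of Proposition \ref{prop:coarser} (the normal fan is taken with respect to $V$). Next, since $\Pi_n = \Perm(1,2,\dots,n+1)$ has distinct vertex coordinates, it is a generic permutohedron, so by the remark following Proposition \ref{prop:coarser} (citing \cite[Proposition 2.6]{bible}) we have $\Sigma_V(\Pi_n) = \B_n$. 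On the other hand, Proposition \ref{prop:coarser} says exactly that $\Sigma_V(Q)$ is refined by $\B_n$, which is $\Sigma_V(\Pi_n)$.

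With these two observations, the hypotheses of Theorem \ref{thm:reduction-gen} are met with $P = \Pi_n$ and the given $Q$: both polytopes sit in translates of the same subspace $V$, and the normal fan of $\Pi_n$ refines the normal fan of $Q$. Applying Theorem \ref{thm:reduction-gen} directly yields the desired conclusion for each fixed dimension $k$, and hence also the full statement that BV-$\alpha$-positivity of $\Pi_n$ implies BV-$\alpha$-positivity of $Q$.

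There is essentially no obstacle left beyond assembling the two prior results; the real geometric content has already been absorbed into Theorem \ref{thm:reduction-gen}, whose proof handled the key step that each normal cone of $Q$ decomposes as a union of equidimensional normal cones of $\Pi_n$, together with the compatibility of the BV-$\alpha$-valuation with such unions via Corollary \ref{cor:sum}.
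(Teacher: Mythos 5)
Your proposal matches the paper's argument exactly: the paper also obtains Theorem~\ref{thm:reduction-spe} as an immediate consequence of the general Reduction Theorem~\ref{thm:reduction-gen} combined with Proposition~\ref{prop:coarser}, which identifies $\B_n$ as the normal fan of $\Pi_n$ and asserts that it refines the normal fan of any generalized permutohedron. Your write-up just makes explicit the bookkeeping (ambient subspace $V$, genericity of $\Pi_n$) that the paper leaves implicit.
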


\begin{proof}[Proof of Theorem \ref{thm:reduction}]
 The theorem follows from Theorem \ref{thm:reduction-spe} and Lemma \ref{lem:red1}.
\end{proof}

\begin{rem}\label{rem:upton}
  All permutohedra of dimension at most $n$ can be realized in $\R^{n+1}$, and Proposition \ref{prop:coarser} applies to all of these permutohedra. Therefore, the polytope $Q$ in Theorem \ref{thm:reduction-spe} could be any permutohedron of dimension up to $n.$
\end{rem}

\begin{rem}\label{rem:generic}
	Theorem \ref{thm:reduction-spe} still holds if we replace $\Pi_{n}$ with any generic permutohedron, that is, any $\Perm(\v)$ where $\v \in \R^{n+1}$ is a vector with distinct coordinates.
\end{rem}

\subsection{Examples of computing $\Psi_W$} \label{subsec:excmp}
Let $W =\R^D/L$ be a rational quotient space of $\R^D.$ We might identify $W$ with $L^\perp$, the orthogonal complement of $L$ with respect to $\R^D.$ We consider 
\[ \Lambda(W) := \text{ the orthogonal projection of $\Z^D$ to $L^\perp$} \]
	to be the lattice of $W,$ which is important in computing $\Psi_W$ associated to Berline-Vergne's construction.

The computation of the function $\Psi_W$ 
is carried out recursively. Hence, it is quicker to compute $\Psi_W$ for lower dimensional cones. 
Since the dimension of $(\ncone(F, P))^\circ$ is equal to the codimension of $F$ with respect to $P$, the value of $\alpha(F,P)$ is easier to compute if $F$ is a higher dimensional face. The following easy results follow from comments in \cite[Example 20.2]{barvinok}.
\begin{lem}\label{lem:01dim} Suppose $\alpha$ is the BV-$\alpha$-valuation. Then for any integral polytope $P$ and any facet $F$ of $P,$ we have
$\alpha(P,P) = 1 \text{ and } \alpha(F, P) = 1/2.$
\end{lem}

In general, the computation of $\Psi_W([C])$ is quite complicated. However, when $C$ is a \emph{unimodular cone} with respect to the lattice $\Lambda(W)$, that is, $C$ is generated by a set of rays that can be extended to a basis of $\Lambda(W)$, computations are greatly simplified. 
In small dimensions we can even give a simple closed expression for $\Psi_W$ of unimodular cones. The following result is given in \cite[Example 19.3]{barvinok}.

\begin{lem}\label{lem:2dim} 
  Suppose $C = \cone(\u_1, \u_2) \subset W,$ where $\{\u_1, \u_2\}$ can be extended to a basis of the lattice $\Lambda(W)$. Then 
  \[\Psi_W([C])=\displaystyle \frac{1}{4}+\frac{1}{12}\left(\frac{\langle \u_1,\u_2\rangle}{\langle \u_1,\u_1\rangle}+\frac{\langle \u_1, \u_2\rangle}{\langle \u_2,\u_2\rangle}\right).\]
\end{lem}


With the help of Maple code provided by Vergne, one can obtain a formula for computing $\Psi_W$ of a $3$-dimensional unimodular cone.
\begin{lem}\label{lem:3dim} 

  Suppose $C = \cone(\u_1, \u_2, \u_3) \subset W,$ where $\{\u_1, \u_2, \u_3\}$ can be extended to a basis of the lattice $\Lambda(W)$. Then 
\[
  \Psi_W([C])= \displaystyle \frac{1}{8}+\frac{1}{24}\left( \frac{\langle \u_1, \u_2\rangle}{\langle \u_1, \u_1\rangle}+\frac{\langle \u_1, \u_2\rangle}{\langle \u_2,\u_2\rangle}+ \frac{\langle \u_1, \u_3\rangle}{\langle \u_1,\u_1\rangle}+ \frac{\langle \u_1, \u_3\rangle}{\langle \u_3, \u_3\rangle}+\frac{\langle \u_3, \u_2\rangle}{\langle \u_2, \u_2\rangle}+ \frac{\langle \u_3, \u_2\rangle}{\langle \u_3,\u_3\rangle} \right). 
\]
\end{lem}

\begin{rem}
The formulas for $2$-dimensional and $3$-dimensional unimodular cones appear to be simple. However, the apparent simplicity breaks down for dimension $4$. The formula for $4$-dimensional unimodular cones include (way) more than $1000$ terms.
\end{rem}

\subsection{Symmetry Property}\label{subsec:moreappl} 
Theorem \ref{thm:reduction-gen} mainly follows from Properties (P1) and (P3) of the BV-$\alpha$-valuation. In this subsection, we will focus on Property (P5) (as well as Property (P6)), showing reasons why this particular construction of $\alpha$ is nice.


\begin{lem}\label{lem:symmetric0}
  The valuation $\Psi_W$ is \emph{symmetric about the coordinates}, i.e., for any cone $C \in W$ and any permutation $\sigma \in \fS_D,$ we have 
  \[ \Psi_W([C]) = \Psi_{\sigma(W)}([\sigma(C)]),\]
  where $\sigma(T) = \{ (x_{\sigma(1)}, x_{\sigma(2)}, \dots, x_{\sigma(D)}) \ : \ (x_1,\dots, x_D)\in T\}$ for any set $T \subseteq \R^D.$
\end{lem}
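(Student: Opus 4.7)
The plan is to observe that Lemma \ref{lem:symmetric0} is essentially a direct special case of Property (P6) of the BV-$\alpha$-valuation, once one recognizes that a coordinate permutation is realized by an orthogonal unimodular linear map.

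More concretely, given $\sigma \in \S_D$, I would define the linear map $T_\sigma : \R^D \to \R^D$ by $T_\sigma(x_1, \dots, x_D) = (x_{\sigma(1)}, \dots, x_{\sigma(D)})$, so that for any set $S \subseteq \R^D$ we have $T_\sigma(S) = \sigma(S)$ in the notation of the lemma. The matrix of $T_\sigma$ in the standard basis is the permutation matrix associated to $\sigma$. This matrix has integer entries and determinant $\pm 1$, so $T_\sigma$ is unimodular; and its transpose equals its inverse, so $T_\sigma$ is orthogonal. In particular, $T_\sigma$ preserves the standard lattice $\Z^D$, and more generally sends any sublattice $\Lambda \subseteq \Z^D$ to $T_\sigma(\Lambda) = \sigma(\Lambda)$.

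With this identification in hand, Property (P6) applied to the orthogonal unimodular transformation $T_\sigma$ and to the cone $C$ immediately yields
\[
\Psi([C], \Lambda) = \Psi([T_\sigma(C)], T_\sigma(\Lambda)) = \Psi([\sigma(C)], \sigma(\Lambda)),
\]
which is exactly the claim. There is no real obstacle here; the only thing worth verifying carefully is that the map $T_\sigma$ one writes down indeed agrees with the set-theoretic operation $\sigma(\cdot)$ as defined in the lemma statement, and that permutation matrices satisfy both the orthogonality and unimodularity hypotheses required by (P6). Both are routine linear-algebra checks.
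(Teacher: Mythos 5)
Your proof is correct and follows essentially the same route as the paper: both identify the coordinate permutation with its permutation matrix, note that such a matrix is orthogonal and unimodular, and then invoke Property (P6). The only difference is that you spell out the routine linear-algebra checks slightly more explicitly.
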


\begin{proof}
Let $M_\sigma$ be the permutation matrix corresponding to $\sigma.$ Then the lemma follows from the observation that $T$ is mapped to $\sigma(T)$ under the linear transformation $M_\sigma$ and any permutation matrix is orthonormal and unimodular.
\end{proof}

The above result motivates the following definition.
\begin{defn}\label{defn:symmetric}
  Suppose $\alpha$ is a construction such that McMullen's formula \eqref{equ:exterior1} holds. We say it is \emph{symmetric about the coordinates} 
  if $\alpha(F, P) = \alpha(G, Q)$ 
  whenever $ (\ncone(F,P))^\circ = \sigma \left( (\ncone(G,Q))^\circ\right)$ for some $\sigma \in \fS_D.$
\end{defn}

Therefore, we have the following:
\begin{lem}\label{lem:symmetric}
The BV-$\alpha$-valuation is symmetric about the coordinates.
\end{lem}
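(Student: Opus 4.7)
The plan is to read Lemma \ref{lem:symmetric} as a direct corollary of Lemma \ref{lem:symmetric0}, which already packages the relevant symmetry of $\Psi$ as a special case of Property (P6). I would begin by unpacking the definition of $\alpha$ from \eqref{equ:defnalpha1}, writing
\[ \alpha(F,P) = \Psi([\fcone^p(F,P)],\, \Lambda_{F,P}), \qquad \Lambda_{F,P} := (V' \cap \Z^D)/L', \]
where $V'$ and $L'$ are the linear subspaces obtained by shifting the affine spans of $P$ and $F$ to the origin, and writing the analogous formula for $\alpha(G,Q)$ with lattice $\Lambda_{G,Q}$. The hypothesis $\fcone^p(F,P) = \sigma(\fcone^p(G,Q))$ then provides exactly the geometric input to transport one expression to the other.

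The key verification is that the two lattices are compatible under $\sigma$, namely $\Lambda_{F,P} = \sigma(\Lambda_{G,Q})$. Equality of the cones forces their ambient affine spans to coincide, so $\sigma$ must carry the $V'$ and $L'$ for $(G,Q)$ onto the corresponding subspaces for $(F,P)$. Since a coordinate permutation preserves $\Z^D$ and commutes both with intersection by a coordinate-compatible subspace and with the quotient operation, these identifications descend directly to the lattices, giving the required equality. With that in place, Lemma \ref{lem:symmetric0} applied to $C = \fcone^p(G,Q)$ and $\Lambda = \Lambda_{G,Q}$ yields
\[ \Psi([\fcone^p(G,Q)], \Lambda_{G,Q}) = \Psi([\sigma(\fcone^p(G,Q))], \sigma(\Lambda_{G,Q})) = \Psi([\fcone^p(F,P)], \Lambda_{F,P}), \]
which is exactly $\alpha(G,Q) = \alpha(F,P)$.

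The only real obstacle is the bookkeeping around the quotient spaces: one must confirm that the cone equality truly forces the relevant identifications of the ambient subspaces, and that these identifications pass cleanly to the integer lattices $(V \cap \Z^D)/L$. Beyond this, no additional machinery is required: the statement is essentially Lemma \ref{lem:symmetric0} (equivalently, Property (P6) for the permutation matrix $M_\sigma$, which is orthogonal and unimodular) transported through the definition of $\alpha$.
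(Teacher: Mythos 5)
Your approach is exactly the paper's intended one: the paper gives no explicit proof, placing Lemma~\ref{lem:symmetric} immediately after Lemma~\ref{lem:symmetric0} and Definition~\ref{defn:symmetric} with the word ``Therefore,'' so the argument is precisely ``unwind \eqref{equ:defnalpha1}, apply Lemma~\ref{lem:symmetric0} (i.e.\ Property (P6) for the permutation matrix $M_\sigma$), and check the lattices match.'' Your write-up spells this out in more detail than the paper does.

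One step is stated more strongly than it can be justified from the bare hypothesis of Definition~\ref{defn:symmetric}. You write that ``Equality of the cones forces their ambient affine spans to coincide, so $\sigma$ must carry the $V'$ and $L'$ for $(G,Q)$ onto the corresponding subspaces for $(F,P)$.'' The equality $\fcone^p(F,P)=\sigma(\fcone^p(G,Q))$, since each pointed feasible cone is full-dimensional in the quotient, only forces $L_F'^{\perp}\cap V_F'=\sigma(L_G'^{\perp}\cap V_G')$; it does not by itself determine $V'$ and $L'$ individually, and the quotient lattice $(V'\cap\Z^D)/L'$ depends on the pair $(V',L')$, not just on $L'^{\perp}\cap V'$. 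So the deduction $\Lambda_{F,P}=\sigma(\Lambda_{G,Q})$ needs an extra input. In the paper's application the ambient spans coincide (everything is a face of a generalized permutohedron in $\R^{n+1}$ with $V_F'=V_G'$ equal to the $\sigma$-invariant hyperplane $\sum x_i=0$), so $\sigma(V_G')=V_F'$ is automatic and then $L_F'=\sigma(L_G')$ follows, closing the gap; but this should be said rather than attributed to the cone equality alone. You flag ``the bookkeeping around the quotient spaces'' as the real obstacle, which is the right instinct --- just be precise that the resolution uses the fixed, permutation-invariant ambient subspace, not the cone equality.
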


\begin{proof}
  Suppose $\Psi_W$ is the construction given by Berline-Vergne described in \S \ref{subsec:BV}.  
  Let $C = (\ncone(F,P))^\circ$ and $C' = (\ncone(G,Q))^\circ.$ Then by Property (P5), one sees that it is enough to show that \[\Psi_{\R^D/\lin(F)}([C]) = \Psi_{\R^D/\sigma(\lin(G))}([C]).\]
which follows from Property (P6). 
\end{proof}

%

Below we give an example of how to use Lemma \ref{lem:symmetric} to compute BV-$\alpha$-values directly without using Berline-Vergne's recursive computation for $\Psi_W$ (which could be quite complicated as we've seen in \S \ref{subsec:excmp}). 
\begin{ex}\label{ex:3d}
	Consider the regular permutohedron $\Pi_3$. (See the polytope on the left below.)
\begin{figure}[h]
\begin{tikzpicture}%
	\begin{scope}	[x={(0.126276cm, 0.984539cm)},
	y={(-0.229499cm, -0.090066cm)},
	z={(0.965083cm, -0.150240cm)},
	scale=1.000000,
	back/.style={loosely dotted, thin},
	edge/.style={color=blue!95!black, thick},
	facetred/.style={fill=red!95!black,fill opacity=0.200000},
	facet/.style={fill=blue!95!black,fill opacity=0.200000},
	vertex/.style={inner sep=1pt,circle,draw=green!25!black,fill=green!75!black,thick,anchor=base}]
%
%
\coordinate (-2.00000, -1.00000, 0.00000) at (-2.00000, -1.00000, 0.00000);
\coordinate (-2.00000, 0.00000, -1.00000) at (-2.00000, 0.00000, -1.00000);
\coordinate (-2.00000, 0.00000, 1.00000) at (-2.00000, 0.00000, 1.00000);
\coordinate (-2.00000, 1.00000, 0.00000) at (-2.00000, 1.00000, 0.00000);
\coordinate (-1.00000, -2.00000, 0.00000) at (-1.00000, -2.00000, 0.00000);
\coordinate (-1.00000, 0.00000, -2.00000) at (-1.00000, 0.00000, -2.00000);
\coordinate (-1.00000, 0.00000, 2.00000) at (-1.00000, 0.00000, 2.00000);
\coordinate (-1.00000, 2.00000, 0.00000) at (-1.00000, 2.00000, 0.00000);
\coordinate (0.00000, -2.00000, -1.00000) at (0.00000, -2.00000, -1.00000);
\coordinate (0.00000, -2.00000, 1.00000) at (0.00000, -2.00000, 1.00000);
\coordinate (0.00000, -1.00000, -2.00000) at (0.00000, -1.00000, -2.00000);
\coordinate (0.00000, -1.00000, 2.00000) at (0.00000, -1.00000, 2.00000);
\coordinate (0.00000, 1.00000, -2.00000) at (0.00000, 1.00000, -2.00000);
\coordinate (0.00000, 1.00000, 2.00000) at (0.00000, 1.00000, 2.00000);
\coordinate (0.00000, 2.00000, -1.00000) at (0.00000, 2.00000, -1.00000);
\coordinate (0.00000, 2.00000, 1.00000) at (0.00000, 2.00000, 1.00000);
\coordinate (1.00000, -2.00000, 0.00000) at (1.00000, -2.00000, 0.00000);
\coordinate (1.00000, 0.00000, -2.00000) at (1.00000, 0.00000, -2.00000);
\coordinate (1.00000, 0.00000, 2.00000) at (1.00000, 0.00000, 2.00000);
\coordinate (1.00000, 2.00000, 0.00000) at (1.00000, 2.00000, 0.00000);
\coordinate (2.00000, -1.00000, 0.00000) at (2.00000, -1.00000, 0.00000);
\coordinate (2.00000, 0.00000, -1.00000) at (2.00000, 0.00000, -1.00000);
\coordinate (2.00000, 0.00000, 1.00000) at (2.00000, 0.00000, 1.00000);
\coordinate (2.00000, 1.00000, 0.00000) at (2.00000, 1.00000, 0.00000);
\draw[edge,back] (-2.00000, -1.00000, 0.00000) -- (-2.00000, 0.00000, -1.00000);
\draw[edge,back] (-2.00000, -1.00000, 0.00000) -- (-2.00000, 0.00000, 1.00000);
\draw[edge,back] (-2.00000, -1.00000, 0.00000) -- (-1.00000, -2.00000, 0.00000);
\draw[edge,back] (-1.00000, -2.00000, 0.00000) -- (0.00000, -2.00000, -1.00000);
\draw[edge,back] (-1.00000, -2.00000, 0.00000) -- (0.00000, -2.00000, 1.00000);
\draw[edge,back] (-1.00000, 0.00000, -2.00000) -- (0.00000, -1.00000, -2.00000);
\draw[edge,back] (0.00000, -2.00000, -1.00000) -- (0.00000, -1.00000, -2.00000);
\draw[edge,back] (0.00000, -2.00000, -1.00000) -- (1.00000, -2.00000, 0.00000);
\draw[edge,back] (0.00000, -2.00000, 1.00000) -- (0.00000, -1.00000, 2.00000);
\draw[edge,back] (0.00000, -2.00000, 1.00000) -- (1.00000, -2.00000, 0.00000);
\draw[edge,back] (0.00000, -1.00000, -2.00000) -- (1.00000, 0.00000, -2.00000);
\draw[edge,back] (1.00000, -2.00000, 0.00000) -- (2.00000, -1.00000, 0.00000);

\fill[facetred] (-2.00000, -1.00000, 0.00000) -- (-2.00000, 0.00000, 1.00000) -- (-2.00000, 1.00000, 0.00000) -- (-2.00000, 0.00000, -1.00000) -- cycle {};
\fill[facetred] (-1.00000, 0.00000, -2.00000) -- (0.00000, 1.00000, -2.00000) -- (1.00000, 0.00000, -2.00000) -- (0.00000, -1.00000, -2.00000) -- cycle {};
\fill[facetred] (-1.00000, -2.00000, 0.00000) -- (0.00000, -2.00000, 1.00000) -- (1.00000,-2.00000, 0.00000) -- (0.00000, -2.00000, -1.00000) -- cycle {};

\node[vertex] at (-2.00000, -1.00000, 0.00000)     {};
\node[vertex] at (-1.00000, -2.00000, 0.00000)     {};
\node[vertex] at (0.00000, -2.00000, -1.00000)     {};
\node[vertex] at (0.00000, -1.00000, -2.00000)     {};
\node[vertex] at (0.00000, -2.00000, 1.00000)     {};
\node[vertex] at (1.00000, -2.00000, 0.00000)     {};
\fill[facet] (2.00000, 1.00000, 0.00000) -- (1.00000, 2.00000, 0.00000) -- (0.00000, 2.00000, 1.00000) -- (0.00000, 1.00000, 2.00000) -- (1.00000, 0.00000, 2.00000) -- (2.00000, 0.00000, 1.00000) -- cycle {};
\fill[facet] (2.00000, 1.00000, 0.00000) -- (1.00000, 2.00000, 0.00000) -- (0.00000, 2.00000, -1.00000) -- (0.00000, 1.00000, -2.00000) -- (1.00000, 0.00000, -2.00000) -- (2.00000, 0.00000, -1.00000) -- cycle {};
\fill[facetred] (2.00000, 1.00000, 0.00000) -- (2.00000, 0.00000, -1.00000) -- (2.00000, -1.00000, 0.00000) -- (2.00000, 0.00000, 1.00000) -- cycle {};
\fill[facetred] (1.00000, 0.00000, 2.00000) -- (0.00000, -1.00000, 2.00000) -- (-1.00000, 0.00000, 2.00000) -- (0.00000, 1.00000, 2.00000) -- cycle {};
\fill[facet] (0.00000, 2.00000, -1.00000) -- (-1.00000, 2.00000, 0.00000) -- (-2.00000, 1.00000, 0.00000) -- (-2.00000, 0.00000, -1.00000) -- (-1.00000, 0.00000, -2.00000) -- (0.00000, 1.00000, -2.00000) -- cycle {};
\fill[facet] (0.00000, 2.00000, 1.00000) -- (-1.00000, 2.00000, 0.00000) -- (-2.00000, 1.00000, 0.00000) -- (-2.00000, 0.00000, 1.00000) -- (-1.00000, 0.00000, 2.00000) -- (0.00000, 1.00000, 2.00000) -- cycle {};
\fill[facetred] (1.00000, 2.00000, 0.00000) -- (0.00000, 2.00000, -1.00000) -- (-1.00000, 2.00000, 0.00000) -- (0.00000, 2.00000, 1.00000) -- cycle {};
\draw[edge] (-2.00000, 0.00000, -1.00000) -- (-2.00000, 1.00000, 0.00000);
\draw[edge] (-2.00000, 0.00000, -1.00000) -- (-1.00000, 0.00000, -2.00000);
\draw[edge] (-2.00000, 0.00000, 1.00000) -- (-2.00000, 1.00000, 0.00000);
\draw[edge] (-2.00000, 0.00000, 1.00000) -- (-1.00000, 0.00000, 2.00000);
\draw[edge] (-2.00000, 1.00000, 0.00000) -- (-1.00000, 2.00000, 0.00000);
\draw[edge] (-1.00000, 0.00000, -2.00000) -- (0.00000, 1.00000, -2.00000);
\draw[edge] (-1.00000, 0.00000, 2.00000) -- (0.00000, -1.00000, 2.00000);
\draw[edge] (-1.00000, 0.00000, 2.00000) -- (0.00000, 1.00000, 2.00000);
\draw[edge] (-1.00000, 2.00000, 0.00000) -- (0.00000, 2.00000, -1.00000);
\draw[edge] (-1.00000, 2.00000, 0.00000) -- (0.00000, 2.00000, 1.00000);
\draw[edge] (0.00000, -1.00000, 2.00000) -- (1.00000, 0.00000, 2.00000);
\draw[edge] (0.00000, 1.00000, -2.00000) -- (0.00000, 2.00000, -1.00000);
\draw[edge] (0.00000, 1.00000, -2.00000) -- (1.00000, 0.00000, -2.00000);
\draw[edge] (0.00000, 1.00000, 2.00000) -- (0.00000, 2.00000, 1.00000);
\draw[edge] (0.00000, 1.00000, 2.00000) -- (1.00000, 0.00000, 2.00000);
\draw[edge] (0.00000, 2.00000, -1.00000) -- (1.00000, 2.00000, 0.00000);
\draw[edge] (0.00000, 2.00000, 1.00000) -- (1.00000, 2.00000, 0.00000);
\draw[edge] (1.00000, 0.00000, -2.00000) -- (2.00000, 0.00000, -1.00000);
\draw[edge] (1.00000, 0.00000, 2.00000) -- (2.00000, 0.00000, 1.00000);
\draw[edge] (1.00000, 2.00000, 0.00000) -- (2.00000, 1.00000, 0.00000);
\draw[edge] (2.00000, -1.00000, 0.00000) -- (2.00000, 0.00000, -1.00000);
\draw[edge] (2.00000, -1.00000, 0.00000) -- (2.00000, 0.00000, 1.00000);
\draw[edge] (2.00000, 0.00000, -1.00000) -- (2.00000, 1.00000, 0.00000);
\draw[edge] (2.00000, 0.00000, 1.00000) -- (2.00000, 1.00000, 0.00000);
\node[vertex] at (-2.00000, 0.00000, -1.00000)     {};
\node[vertex] at (-2.00000, 0.00000, 1.00000)     {};
\node[vertex] at (-2.00000, 1.00000, 0.00000)     {};
\node[vertex] at (-1.00000, 0.00000, -2.00000)     {};
\node[vertex] at (-1.00000, 0.00000, 2.00000)     {};
\node[vertex] at (-1.00000, 2.00000, 0.00000)     {};
\node[vertex] at (0.00000, -1.00000, 2.00000)     {};
\node[vertex] at (0.00000, 1.00000, -2.00000)     {};
\node[vertex] at (0.00000, 1.00000, 2.00000)     {};
\node[vertex] at (0.00000, 2.00000, -1.00000)     {};
\node[vertex] at (0.00000, 2.00000, 1.00000)     {};
\node[vertex] at (1.00000, 0.00000, -2.00000)     {};
\node[vertex] at (1.00000, 0.00000, 2.00000)     {};
\node[vertex] at (1.00000, 2.00000, 0.00000)     {};
\node[vertex] at (2.00000, -1.00000, 0.00000)     {};
\node[vertex] at (2.00000, 0.00000, -1.00000)     {};
\node[vertex] at (2.00000, 0.00000, 1.00000)     {};
\node[vertex] at (2.00000, 1.00000, 0.00000)     {};
\end{scope}

\begin{scope}[scale=0.9, xshift=7cm, x={(0.989872cm, -0.105320cm)},
	y={(0.095192cm, 0.989872cm)},
	z={(-0.105320cm, -0.095192cm)},
	scale=2.500000,
	back/.style={loosely dotted, thin},
	edge/.style={color=blue!95!black, thick},
	facetred/.style={fill=red!95!black,fill opacity=0.200000},
	facet/.style={fill=blue!95!black,fill opacity=0.200000},
	vertex/.style={inner sep=1pt,circle,draw=green!25!black,fill=green!75!black,thick,anchor=base}]

%
%
\coordinate (-1.00000, 0.00000, 0.00000) at (-1.00000, 0.00000, 0.00000);
\coordinate (0.00000, -1.00000, 0.00000) at (0.00000, -1.00000, 0.00000);
\coordinate (0.00000, 0.00000, -1.00000) at (0.00000, 0.00000, -1.00000);
\coordinate (0.00000, 0.00000, 1.00000) at (0.00000, 0.00000, 1.00000);
\coordinate (0.00000, 1.00000, 0.00000) at (0.00000, 1.00000, 0.00000);
\coordinate (1.00000, 0.00000, 0.00000) at (1.00000, 0.00000, 0.00000);
\draw[edge,back] (-1.00000, 0.00000, 0.00000) -- (0.00000, 0.00000, -1.00000);
\draw[edge,back] (0.00000, -1.00000, 0.00000) -- (0.00000, 0.00000, -1.00000);
\draw[edge,back] (0.00000, 0.00000, -1.00000) -- (0.00000, 1.00000, 0.00000);
\draw[edge,back] (0.00000, 0.00000, -1.00000) -- (1.00000, 0.00000, 0.00000);
\node[vertex] at (0.00000, 0.00000, -1.00000)     {};
\fill[facet] (0.00000, 1.00000, 0.00000) -- (-1.00000, 0.00000, 0.00000) -- (0.00000, 0.00000, 1.00000) -- cycle {};
\fill[facet] (0.00000, 0.00000, 1.00000) -- (-1.00000, 0.00000, 0.00000) -- (0.00000, -1.00000, 0.00000) -- cycle {};
\fill[facet] (1.00000, 0.00000, 0.00000) -- (0.00000, 0.00000, 1.00000) -- (0.00000, 1.00000, 0.00000) -- cycle {};
\fill[facet] (1.00000, 0.00000, 0.00000) -- (0.00000, -1.00000, 0.00000) -- (0.00000, 0.00000, 1.00000) -- cycle {};
\draw[edge] (-1.00000, 0.00000, 0.00000) -- (0.00000, -1.00000, 0.00000);
\draw[edge] (-1.00000, 0.00000, 0.00000) -- (0.00000, 0.00000, 1.00000);
\draw[edge] (-1.00000, 0.00000, 0.00000) -- (0.00000, 1.00000, 0.00000);
\draw[edge] (0.00000, -1.00000, 0.00000) -- (0.00000, 0.00000, 1.00000);
\draw[edge] (0.00000, -1.00000, 0.00000) -- (1.00000, 0.00000, 0.00000);
\draw[edge] (0.00000, 0.00000, 1.00000) -- (0.00000, 1.00000, 0.00000);
\draw[edge] (0.00000, 0.00000, 1.00000) -- (1.00000, 0.00000, 0.00000);
\draw[edge] (0.00000, 1.00000, 0.00000) -- (1.00000, 0.00000, 0.00000);
\node[vertex] at (-1.00000, 0.00000, 0.00000)     {};
\node[vertex] at (0.00000, -1.00000, 0.00000)     {};
\node[vertex] at (0.00000, 0.00000, 1.00000)     {};
\node[vertex] at (0.00000, 1.00000, 0.00000)     {};
\node[vertex] at (1.00000, 0.00000, 0.00000)     {};
\end{scope}

\end{tikzpicture}
\end{figure}
The group $O_4(\mathbb{Z})$ acts on faces of it. In particular, its edges split into two orbits:
\begin{enumerate}
\item One orbit consists of edges between a square and an hexagon. There are $24$ of these, and we denote their common $\alpha$-value by $\alpha_1$.
\item The other orbit consists of edges between the hexagons. There are $12$ of these, and we denote their common $\alpha$-value by $\alpha_2$.
\end{enumerate}
Note that 
all edges have normalized volume $1$. By Equation \eqref{equ:coeff}, in addition to the knowledge that the linear coefficient of $i(\Pi_3,t)$ is $6$, we get an equation on $\alpha_1$ and $\alpha_2:$ 
$24\alpha_1 + 12\alpha_2 = 6.$

We need one more equation in order to find values of $\alpha_i$'s, 
so we look at a deformation of $\Pi_3$. We can push away the square faces to get the polytope $\Perm(1,1,4,4)$, which after translating by $(1,1,1,1)$ and scaling by $3$ becomes the hypersimplex $\Delta_{2,4}$ or $\Perm(0,0,1,1)$. This is a regular octahedron shown on the right side of the figure above.
Notice that all the edges in this new polytope have the same normal cones as the edges in the second orbit above. Hence, they must have the same $\alpha$-value, which we have denoted $\alpha_2$. By Equation \eqref{equ:coeff}, in addition to the knowledge that the linear coefficient of $i(\Delta_{2,4},t)$ is $7/3$, we get our second equation 
$12\alpha_2 = 7/3.$ 

Solving the two linear equations above, we obtain $\alpha_1=11/72$ and $\alpha_2 = 14/72.$
\end{ex}

Property (P5), in particular Lemma \ref{lem:symmetric}, does not hold for all the solutions for McMullen's formula. For example, the construction given by Pommersheim and Thomas in \cite{toddclass} depends on an ordering of a basis for the vector space, which means their construction is not symmetric about coordinates. 
This is the main reason why we work with the BV-$\alpha$-valuations for this paper.

\section{Generic permutohedron}\label{sec:generic} 

Since the proof of Theorem \ref{thm:reduction} is completed in the last section, we will focus on the BV-$\alpha$-valuation arising from the regular permutohedron $\Pi_n,$ or any generic permutohedron. (See Remark \ref{rem:generic}.) 
We use the following setup. 
\begin{setup} \label{setup1}
  \begin{ilist}
\itm Let $\v = (v_1, v_2, \dots, v_{n+1})$ be a vector with strictly increasing entries, and consider the generic permutohedron 
$\Perm(\v) = \Perm(v_1, v_2, \dots, v_{n+1}).$
\itm Suppose $\alpha$ is a construction such that McMullen's formula \eqref{equ:exterior1} holds and it is symmetric about the coordinates (see Definition \ref{defn:symmetric}).
\end{ilist}
\end{setup}
It is clear that (i) covers all generic permutohedron, and the BV-$\alpha$-valuation is a special case of (ii). Under this setup, we will analyze Equation \eqref{equ:exteriorpermv} below for computing $\Lat(\Perm(\v))$ further, and derive a more combinatorial formula for computing $\alpha$-values arising from $\Perm(\v).$ 

Applying McMullen's formula to $P = \Perm(\v),$ we get
\begin{equation}\label{equ:exteriorpermv}
\Lat(\Perm(\v)) = \displaystyle \sum_{F: \textrm{ a face of $\Perm(\v)$}} \alpha(F, \Perm(\v))  \cdot  \nvol(F).
\end{equation}
Because of the symmetric properties of $\Perm(\v)$ and $\alpha,$ a lot of terms in the above summand coincide. Hence, it is natural to group them together as in Example \ref{ex:3d}. In order to do this, we need the following definition and proposition.
\begin{defn}
The symmetric group $\fS_{n+1}$ acts linearly on $\R^{n+1}$ by permuting the coordinates. Two subsets $A_1,A_2 \subset \R^{n+1}$ are said to be \emph{symmetric} if they lie in the same orbit, i.e. if there exist $\sigma\in \fS_{n+1}$ such that $\sigma(A_1)=A_2$. Since the action is orthogonal, two symmetric sets are congruent, in particular, they have the same volume (if measurable).
\end{defn}

The following results are well-known. 
(See for example \cite[Propostion 2.2 of Chapter VI]{barvinokconvex}.)
\begin{prop}\label{prop:faceposet}
There is a one-to-one correspondence between ordered set partitions of $[n+1]$ and faces of $\Perm(\v)$ defined as follows:

For any ordered set partition ${\cP}= (P_1, P_2, \cdots, P_l)$ of $[n+1]$, the corresponding face is obtained by maximizing any linear functional given by a vector $\c \in \R^{n+1}$ with the property that 
\begin{alist}
\itm $c_i=c_j$ if $i$ and $j$ are both in $P_k$ for some $k$, and
  \itm $c_i<c_j$ if $i\in P_{k_1}$ and $j\in P_{k_2}$ with $k_1< k_2$.
\end{alist}
Let $m_i=|P_i|$. Then the corresponding face has dimension $n+1-l$ and it is congruent to
\[
\Perm(\v_{M_1})\times \Perm(\v_{M_2})\times \cdots \Perm(\v_{M_l}),
\]
where $\v_{M_i} = \left(v_j:\quad \displaystyle\sum_{k=1}^{i-1} m_k < j \leq  \sum_{k=1}^i m_k\right)$. In other words, $\v_{M_1}$ consists of the first $m_1$ entries of $\v=(v_1,\dots, v_{n+1}),$ $\v_{M_2}$ consists of the next $m_2$ entries, and so on.

We call the ordered tuple $\m:=(m_1,m_2,\cdots,m_l)$ \emph{the composition of $\cP$}. 
\end{prop}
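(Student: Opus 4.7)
The plan is to exploit the description of the normal fan of $\Perm(\v)$ and the standard combinatorial indexing of its cones. Since $\v$ has strictly increasing (hence distinct) coordinates, $\Perm(\v)$ is a generic permutohedron and by \cite[Proposition 2.6]{bible} its normal fan with respect to $V = \{\x \in \R^{n+1} : \sum x_i = 0\}$ is exactly the braid arrangement fan $\B_n$. The cones of $\B_n$ are in natural bijection with ordered set partitions of $[n+1]$: to $\cP = (P_1, \dots, P_l)$ one associates the cone $\sigma_\cP \subset V^*$ consisting of vectors $\c$ satisfying conditions (a) and (b) in the statement. Since faces of $\Perm(\v)$ correspond bijectively to cones in its normal fan, we obtain the desired bijection between ordered set partitions and faces.

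Next I would identify the face corresponding to $\cP$ as the set of maximizers of any $\c \in \mathrm{relint}(\sigma_\cP)$, which is precisely the definition of the face associated to a normal cone. For the dimension count, the cone $\sigma_\cP$ lies in $V^*$ (which is $n$-dimensional) and is parameterized by $l$ block-values subject to one linear constraint, so $\dim \sigma_\cP = l-1$. By equation \eqref{equ:nconedim}, the associated face has dimension $\dim V - \dim \sigma_\cP = n - (l-1) = n+1-l$.

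For the product decomposition, I would describe the maximizing vertices explicitly. The vertices of $\Perm(\v)$ are the points $(v_{\tau(1)}, \dots, v_{\tau(n+1)})$ for $\tau \in \S_{n+1}$, and maximizing $\c(\x)$ with $\c$ as in (a)--(b) forces the largest entries of $\v$ to be paired with the largest entries of $\c$. Since the entries of $\c$ strictly increase across blocks (and are constant within each block), the maximizing vertices are exactly those $\tau$ which map the index set $\{\sum_{j<k}m_j + 1, \dots, \sum_{j\le k}m_j\}$ bijectively onto $P_k$ for every $k$. The coordinate relabeling determined by $\cP$ is an orthogonal permutation of $\R^{n+1}$, and under this relabeling the collection of maximizing vertices becomes exactly the vertex set of $\Perm(\v_{M_1}) \times \cdots \times \Perm(\v_{M_l})$ (embedded in the obvious block-diagonal way). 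Taking convex hulls yields the claimed congruence.

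The main technical point to be careful about is the congruence claim rather than mere combinatorial equivalence: one needs the identification realizing the product structure to be an orthogonal transformation (so that volumes and normalized volumes, needed later in the paper, are preserved). This follows because permutation matrices of $\R^{n+1}$ are orthogonal and unimodular, but it must be stated explicitly to license the later use of Lemma \ref{lem:symmetric}. A secondary point is injectivity of the bijection: distinct ordered set partitions must yield distinct maximizing faces, which relies on $\v$ having strictly distinct entries so that the block sizes, and the inter-block ordering, can be recovered from the face itself.
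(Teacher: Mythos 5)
The paper does not actually provide a proof of this proposition --- it is cited from the literature (the reference appears as a placeholder), and the accompanying Example~\ref{ex:compositions} only sketches the computation your argument formalizes. Your overall route --- identifying the normal fan with $\B_n$ via \cite[Proposition 2.6]{bible}, indexing its cones by ordered set partitions, reading off the dimension via \eqref{equ:nconedim}, and exhibiting the congruence through an orthogonal coordinate permutation --- is the standard one and is the argument the paper intends the reader to supply.

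There is, however, one concrete slip in your identification of the maximizing vertices. With the convention you state, a vertex is $(v_{\tau(1)},\dots,v_{\tau(n+1)})$, so position $i$ carries the value $v_{\tau(i)}$. Maximizing $\sum_i c_i v_{\tau(i)}$ via the rearrangement inequality forces $\tau(i)\in M_k:=\bigl\{\sum_{j<k}m_j+1,\dots,\sum_{j\le k}m_j\bigr\}$ exactly when $i\in P_k$; that is, $\tau(P_k)=M_k$ for all $k$, not $\tau(M_k)=P_k$ as you wrote. The two conditions are inverse to each other and describe \emph{different} sets of vertices: for $\cP=(\{1,4,6\},\{2,5\},\{3\})$ your condition admits a $\tau$ with $\tau(1)=4$, producing a vertex with $v_4$ in position $1$, but position $1\in P_1$ must carry one of $v_1,v_2,v_3$. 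Once this direction is corrected, the rest of the argument --- the dimension count $\dim\sigma_\cP=l-1$ hence $\dim F = n+1-l$, the orthogonality of the relabeling map (which is what makes ``congruent'' rather than merely ``combinatorially equivalent'' correct), and the injectivity remark --- is sound.
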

\begin{ex}\label{ex:compositions} 
  Let $n=5$ and consider the ordered set partition $\cP = \left(\left\{1,4,6\right\}, \left\{2,5\right\}, \left\{3\right\}\right)$. Then the composition of $\cP$ is $(3,2,1).$ 
  
  The face of $\Perm(\v)$ corresponding to $\cP$ is the face which optimizes any linear functional $\c=(c_1,c_2,c_3,c_4,c_5,c_6)$ with $c_1=c_4=c_6<c_2=c_5<c_3$. 
  In order to figure out this corresponding face, we look for vertices of $\Perm(\v)$ optimizing such a functional $\c.$ One sees that $v_1$,  $v_2$ and $v_3$ should be in positions 1, 4 and 6 of these vertices, $v_4$ and $v_5$ in positions 2 and 5, and $v_6$ in position 3. Therefore, the desired vertices are  \[ \left\{ \left(v_{\mu(1)}, v_{\tau(1)}, v_6, v_{\mu(2)}, v_{\tau(2)}, v_{\mu(3)}\right) \ : \ \mu \in \fS_{\{1,2,3\}}, \tau \in \fS_{\{4,5\}} \right\}.
  \]  
  Hence, we conclude that the face that is corresponding to the ordered set partition $\cP= \left(\left\{1,4,6\right\}, \left\{2,5\right\}, \left\{3\right\}\right)$ is congruent to 
$\Perm(v_1,v_2,v_3)\times\Perm(v_4,v_5)\times \Perm(v_6).$
\end{ex}

By Proposition \ref{prop:faceposet}, two faces of $\Perm(\v)$ are in the same orbit, i.e. they are symmetric, if and only if their corresponding ordered set partitions have the same composition. Therefore, the orbits of the $\fS_{n+1}$-action on the faces of $\Perm(\v)$ are indexed by compositions $\m$ of $n+1$. 
We denote the orbit corresponding to the composition $\m$ by $\O_n(\m).$

Furthermore, under Setup \ref{setup1}, the $\alpha$-construction is symmetric about the coordinates. Hence, for any fixed $\m,$ the value $\alpha(F, \Perm(\v))$ is a constant on $\O_n(\m),$ and thus we can define $\alpha_n(\m)$ to be this constant.

Finally, a canonical representative of $\O_n(\m)$ is chosen as below.
\begin{defn}
Let $\m = (m_1, m_2, \dots, m_l)$ be a composition of $n+1$. Define an ordered set partition ${\cP}(\m)=\big({\cP}(\m)_i\big)$ where 
\[
{\cP}(\m)_i=\left[\displaystyle\sum_{k=1}^{i-1} m_k +1, \sum_{k=1}^i m_k\right].
\]
In other words, the first subset $\cP(\m)_1$ consists of the first $m_1$ positive integers, the second subset $\cP(\m)_2$ consists of the next $m_2$ positive integers, and so on. 

Then we define $F_\m$ to be the face corresponding to the ordered set partition ${\cP}(\m)$ under the bijection given in Proposition \ref{prop:faceposet}.
\end{defn}
\begin{ex}\label{ex:compositions2}
Let $n=5$ and $\m=(3,2,1)$. Then ${\cP}(\m)= \left(\left\{1,2,3\right\}, \left\{4,5\right\}, \left\{6\right\}\right),$ and 
\begin{equation}\label{eqn:exFm}
  F_\m = \conv\left\{ \left(v_{\mu(1)}, v_{\mu(2)}, v_{\mu(3)}, v_{\tau(1)}, v_{\tau(2)}, v_6\right) \ : \ \mu \in \fS_{\{1,2,3\}}, \tau \in \fS_{\{4,5\}} \right\}. \end{equation}
\end{ex}

%

Applying the above discussions to \eqref{equ:exteriorpermv}, we get
\begin{equation}\label{eqn:compositions}
\Lat(\Perm(\v))=\displaystyle \sum_{\textrm{$\m:$ composition of $n+1$}} |\O_n(\m)| \cdot \alpha_n(\m) \cdot \nvol(F_\m).
\end{equation}
Note that one of the terms in the above formula can be explicitly described: For a fixed $\m=(m_1,\cdots, m_l)$, the number of faces in $\O_n(\m)$ is equal to the number of ordered set partitions whose compositions are $\m.$ Thus,
\begin{equation}\label{eqn:orbits}
|\O_n(\m)| = \binom{n+1}{m_1,m_2,\cdots, m_l}.
\end{equation}

It is easy to see that $F_\m$ is always adjacent to the vertex $\v = (v_1, \dots, v_n, v_{n+1}).$ In fact, we show below that every face adjacent to $\v$ arises as $F_\m$ for a unique $\m.$ Note that the vertex cone of $\Perm(\v)$ at $\v$ is spanned by the following $n$ vectors:
\[ \e_1 - \e_2, \e_2 - \e_3, \dots, \e_n - \e_{n+1}.\]
Hence, subsets of these $n$ vectors are in one-to-one correspondence to faces of $\Perm(\v)$ that are adjacent to $\v.$ 

\begin{lem}\label{lem:bij} 
  Let $\m$ be a composition of $n+1,$ and $F_\m$ the face of $\Perm(\v)$ that is associated to $\m.$ Define $S = \cS(\m)$ to be the subset of $[n]$ such that   \[ \aff(F_\m) = \v + \textrm{span}\{ \e_i - \e_{i+1} \ : \ i \in S\}.\]
  
  The map $\cS$ is a bijection from compositions of $n+1$ to subsets of $[n].$ Hence, $\{ F_\m \ : \ \m \text{ is a composition of $n+1$}\}$ consists of all faces of $\Perm(\v)$ that are adjacent to $\v.$
 \end{lem}

\begin{ex}
  Let $n=5$ and $\m=(3,2,1)$. Then $F_\m$ is given by \eqref{eqn:exFm}. One checks that 
\[ \aff(F_\m) = \v + \textrm{span}\{ \e_1 - \e_2, \e_2-\e_3, \e_4 - \e_5\}.\]
Hence, $S = \cS(\m) = \{1, 2, 4\}.$
\end{ex}


\begin{proof}
We can define an inverse to $\cS$ in the following way: Suppose $S$ is a subset of $[n].$ Let $T = [n+1] \setminus S.$ Suppose $T = \{t_1 < t_2 < \cdots < t_l\}.$ Then one verifies that 
\[ S \mapsto \m := (t_1, t_2-t_1, \dots, t_l - t_{l-1})\]
is an inverse to $\cS,$ completing the proof. 
\end{proof}

By the above lemma, we may abuse notation and use subsets of $[n]$ to index Formula \eqref{eqn:compositions}:
\begin{equation}\label{eqn:subsets}
\Lat(\Perm(\v)) = \displaystyle \sum_{S\subseteq[n]} |\O_n(S)| \cdot \alpha_n(S) \cdot \nvol(F_S).
\end{equation}
Note that $\dim(F_S)=|S|$.

\subsection*{Partial results on conjectures}
We will explore consequences of Formula \eqref{eqn:subsets} further in Section \ref{sec:unique}, and will devote the rest of this section to proving Theorem \ref{thm:partial}, providing partial results on our main conjectures.
Clearly, in order to prove Theorem \ref{thm:partial}, we just need to verify the following two statements respectively, assuming $\alpha$ is the BV-$\alpha$-valuation:
\begin{align}
  \alpha_n(S) > 0, \quad &\forall S \subseteq [n], \forall n \le 6, \label{equ:smalldim} \\
  \alpha_n(S) >0, \quad &\forall S \subset [n], |S| = n-2, n-3. \label{equ:34coeff}
\end{align}
Applying Formula \eqref{equ:defnalpha} to our situation, we get
\begin{equation}\label{equ:alphaS}
  \alpha_n(S) = \Psi_{\R^{n+1}/\lin(F_S)}([(\ncone(F_S, \Pi_n))^\circ]). 
\end{equation}

Recall we discussed briefly how to compute Berline-Vergne's $\Psi_W$ in \S \ref{subsec:excmp}. In particular, we discussed that $\Psi_W([C])$ is relatively easier to compute if $C$ is a unimodular cone with respect to the lattice $\Lambda(W).$
The following result on $\ncone(F_S, \Pi_n)^\circ$ shows that the computation of $\alpha_n(S)$ fits into this situation.

\begin{lem}\label{lem:desncone} Let $S \subseteq [n]$ and $V = \R^{n+1}.$ Then
	\[ \ncone_{V}(F_S, \Pi_n) =\{ \c =(c_1, \dots, c_{n+1}) \in V^* \ | \ c_i = c_{i+1} \ \forall i \in S \text{ and } c_i \le c_{i+1} \ \forall i \not\in S\}. \]
	Furthermore, if $[n] \setminus S =\{ i_1 < i_2 < \cdots < i_\ell\}$ and for each $1 \le j \le \ell,$ we define a vector 
	\begin{equation}
		R_{i_j} = \left( \underbrace{0, \cdots, 0}_{i_{j-1}}, \underbrace{\frac{1}{i_j - i_{j-1}}, \cdots, \frac{1}{i_j-i_{j-1}}}_{i_j-i_{j-1}}, \underbrace{\frac{-1}{i_{j+1}-i_{j}}, \cdots, \frac{-1}{i_{j+1}-i_j}}_{i_{j+1}-i_j}, \underbrace{0, \dots, 0}_{n+1-i_{j+1}} \right),
		\label{equ:defnRs}
	\end{equation}
	where by convention $i_0=0$ and $i_{\ell+1}=n+1,$
	then $\ncone(F_S, \Pi_n)^\circ$ is spanned by $R_{i_j}$. Moreover, $\{ R_{i_j}\}$ can be extended to a basis of the lattice $\Lambda(\R^{n+1}/\lin(F_S))$.
\end{lem}

\begin{proof}
	The formula for $\ncone_V(F_S, \Pi_n)$ follows from Proposition \ref{prop:faceposet} and Lemma \ref{lem:bij}. The second conclusion then follows from a direct calculation. 

	By Lemma \ref{lem:bij}, $\lin(F_S) = {\rm span}(\e_i - \e_{i+1} \ : \ i \in S).$ Note that $\{ \e_1 \} \cup \{ \e_i - \e_{i+1} \ : \ i \in [n]\}$ is a basis for $\Z^{n+1}.$ Hence, the orthogonal projections of $\{ \e_1 \} \cup \{ \e_{i_j} - \e_{i_j+1} \ : \ 1 \le j \le \ell \}$ onto $\lin(F_S)^\perp$ is a basis for $\Lambda(\R^{n+1}/\lin(F_S))$. One checks for each $j,$ the orthogonal projection of $\e_{i_j} - \e_{i_j+1}$ is $R_{i_j}.$ Thus, the last assertion follows.
\end{proof}
 
Applying Lemmas \ref{lem:2dim} and \ref{lem:3dim} to the above lemma with $\ell = 2$ and $3$, we obtain precise formulas for $\alpha_n(S)$ when $|S|=n-2$ or $n-3.$
\begin{cor}\label{cor:alphaS34}
 Suppose $S \subseteq [n]$.
 \begin{ilist}
   \itm If $[n] \setminus S =\{i,j\}$ with $i < j,$ then
$ \ds \alpha_n(S) = \frac{1}{4} - \frac{1}{12}\left(\frac{i}{j} + \frac{n+1-j}{n+1-i}\right).$
\itm If $[n] \setminus S =\{i,j,k\}$ with $i < j<k,$ then
$\ds \alpha_n(S) = \frac{1}{8} - \frac{1}{24}\left(\frac{i}{j} +1+ \frac{n+1-k}{n+1-j}\right).$
 
 \end{ilist}

\end{cor}

\begin{proof}[Proof of Theorem \ref{thm:partial}]
  We are able to verify that \eqref{equ:smalldim} is true by directly applying Berline-Vergne's construction $\Psi_W$ and using \eqref{equ:alphaS} and Lemma \ref{lem:desncone}. (We omit all the data to save space. For interested readers, please see examples in \cite[Section 5.1]{casliu-BValpha-v1}.) 
So Part (1) of the theorem follows.

Next, it is easy to check that $\alpha_n(S)$ are positive in both formulas provided in Corollary \ref{cor:alphaS34}. Hence, \eqref{equ:34coeff} and Part (2) of the theorem follow.
\end{proof}

  \begin{rem}
    By Remark \ref{rem:upton}, if $\alpha_6(S)$ is positive for all $S \subseteq [6],$ then we immediately have $\alpha_n(S) > 0$ for all $n < 6$ and any $S \subseteq [n].$ Hence, the proof of Theorem \ref{thm:partial}/(1) can be reduced to proving $\alpha_6(S) > 0$ only. 

    Similarly, 
    it is not necessary to show $\alpha_n(S)$ for both $|S|=n-2$ and $|S|=n-3$ to complete a proof of Theorem \ref{thm:partial}/(2). In fact, it follows from Remark \ref{rem:upton} that if $\alpha_n(S) >0$ for all $n$ and $S$ of size $n-3,$ then $\alpha_n(S)>0$ for all $n$ and $S$ of size $n-2.$ 
  \end{rem}

\section{Uniqueness}\label{sec:unique}
In this section, we take a different point of view and investigate the uniqueness of the $\Psi_W$/$\alpha$ constructions for McMullen's formula. 
%
%
We will apply the mixed valuation theories introduced in \S \ref{subsec:mixed} to Minkowski sums of \emph{hypersimplices}.
\begin{defn}
  The \emph{hypersimplex} $\Delta_{k,n+1}$ is defined as 
  \[ \Delta_{k,n+1} = \Perm ( \underbrace{0,\cdots,0}_{n+1-k}, \underbrace{1,\cdots,1}_{k} ).\]
\end{defn}
The main goal of this part is to prove Theorem \ref{thm:uniqueness0}, which will be rephrased below as Theorem \ref{thm:uniqueness} stating that the $\alpha$-values of faces of $\Perm(\v)$ are uniquely determined as a scalar of mixed valuation of hypersimplices if we require $\alpha$ and $\v$ to be given under Setup \ref{setup1}. Furthermore, as a consequence of this result, we give an equivalent statement of Conjecture \ref{conj:alphas} in Corollary \ref{cor:equimixed}.

As in Setup \ref{setup1}, we consider the generalized permutohedron $\Perm(\v) = \Perm(v_1,v_2,\cdots, v_{n},v_{n+1})$ with $v_1<v_2<\cdots<v_{n}<v_{n+1}$.
We have the following expression for $\Perm(\v)$ as Minkowski sum \cite[Section 16]{bible}. 
\[
\Perm(\v) = w_1\Delta_{1,n+1} + w_2\Delta_{2,n+1} + \cdots + w_{n}\Delta_{n,n+1},
\]
where 
\begin{equation}\label{equ:defnwi}
w_i := v_{i+1}-v_i \text{ for } i=1,2,\dots,n.
\end{equation}
(The $w_i$'s are actually lengths of edges of $\Perm(\v).$ But this is not relevant to our discussion.)

Using the results on mixed volumes -- Theorems \ref{thm:mixedvol} and \ref{thm:mixvolprop} -- we have the following: 
\begin{lem}\label{lem:voledges}
  The normalized volume of $\Perm(\v)$ is a homogeneous polynomial in $w_i$'s with strictly positive coefficients. 
\end{lem}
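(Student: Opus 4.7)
The plan is to apply the mixed-volume machinery from Subsection \ref{subsec:mixed} directly to the given Minkowski decomposition $\Perm(\v) = w_1 \Delta_{1,n+1} + \cdots + w_n \Delta_{n,n+1}$. Since each $w_i > 0$ and distinct $v_i$'s make $\Perm(\v)$ an $n$-dimensional polytope living in a translate of the subspace $V = \{x_1 + \cdots + x_{n+1} = 0\}$, Theorem \ref{thm:mixedvol} (applied with $d = n$, $P_i = \Delta_{i,n+1}$, $t_i = w_i$, and $\Lambda = V \cap \Z^{n+1}$) immediately expresses $\vol_\Lambda(\Perm(\v))$ as a homogeneous polynomial of degree $n$ in the variables $w_1, \dots, w_n$, whose coefficients (after collecting) are positive multinomial multiples of mixed volumes $\mv_\Lambda(\Delta_{j_1, n+1}, \dots, \Delta_{j_n, n+1})$. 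Since $\dim \Perm(\v) = \dim \Lambda = n$, this polynomial equals $\nvol(\Perm(\v))$.

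It remains to check that every coefficient is strictly positive, and this is where Theorem \ref{thm:mixvolprop}(2) does the work. By that theorem, $\mv_\Lambda(\Delta_{j_1,n+1}, \dots, \Delta_{j_n,n+1}) > 0$ if and only if each hypersimplex in the tuple contains a line segment so that the resulting direction vectors are linearly independent. For any $k$ with $1 \le k \le n$, the hypersimplex $\Delta_{k,n+1}$ is full-dimensional in $V$ and contains line segments in every direction $\e_a - \e_b$ with $a \ne b$. In particular, each $\Delta_{j_i, n+1}$ contains a translate of each of the $n$ linearly independent segments $[\bm 0, \e_1 - \e_2], [\bm 0, \e_2 - \e_3], \dots, [\bm 0, \e_n - \e_{n+1}]$, so we can choose one segment per factor with linearly independent direction vectors. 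Hence every mixed volume appearing in the expansion is strictly positive.

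No step looks delicate: the argument is essentially a direct combination of Theorems \ref{thm:mixedvol} and \ref{thm:mixvolprop} with the trivial geometric observation that each proper hypersimplex $\Delta_{k,n+1}$ ($1 \le k \le n$) is $n$-dimensional in $V$. The only thing to keep track of is that the lattice normalization in Theorem \ref{thm:mixedvol} matches the one making $\nvol(\Perm(\v)) = \vol_\Lambda(\Perm(\v))$, which is immediate from the fact that $\Perm(\v)$ and each hypersimplex $\Delta_{k,n+1}$ sit in parallel translates of $V$.
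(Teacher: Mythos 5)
Your proof is correct and follows the same approach the paper implicitly takes: the paper simply cites Theorems~\ref{thm:mixedvol} and~\ref{thm:mixvolprop} without spelling out details, and you have supplied exactly the missing verification — namely that every hypersimplex $\Delta_{k,n+1}$ with $1 \le k \le n$ is full-dimensional in $V$ (so one can pick segments with independent directions in every factor, including repeated ones), which makes every mixed volume in the expansion strictly positive via Theorem~\ref{thm:mixvolprop}(2). Nothing to add.
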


In \cite{bible}, the coefficients of the above homogeneous polynomial are called \emph{mixed Eulerian numbers}, and some basic properties are established. One of the properties is the following: 
\begin{lem}\label{lem:squarefree}
  The coefficient of $w_1w_2\cdots w_n$, the unique squarefree monomial, in the homogeneous polynomial assumed in Lemma \ref{lem:voledges} is $n!$
\end{lem}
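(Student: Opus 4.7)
The plan is to apply the mixed-valuation machinery from Section \ref{subsec:mixed} with $\phi = \nvol$, a homogeneous valuation of degree $n$, to the Minkowski decomposition $\Perm(\v) = \sum_{i=1}^n w_i\,\Delta_{i,n+1}$. Theorem \ref{thm:mixedvol} expands
\[
\nvol(\Perm(\v)) \;=\; \sum_{j_1,\dots,j_n \in [n]} \mv\bigl(\Delta_{j_1,n+1},\dots,\Delta_{j_n,n+1}\bigr)\, w_{j_1}w_{j_2}\cdots w_{j_n}.
\]
Since the averaged mixed valuation is symmetric in its arguments (Lemma \ref{lem:mixval}(i)), each of the $n!$ tuples $(j_1,\dots,j_n)$ that is a permutation of $(1,\dots,n)$ contributes the same quantity. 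Hence the coefficient of the squarefree monomial $w_1 w_2 \cdots w_n$ equals $n!\cdot \mv(\Delta_{1,n+1},\dots,\Delta_{n,n+1})$, reducing the lemma to the mixed-volume identity
\[
\mv(\Delta_{1,n+1},\Delta_{2,n+1},\dots,\Delta_{n,n+1})\;=\;1.
\]

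To establish this identity I would apply Theorem \ref{thm:mobius} with $\phi = \nvol$ and $P_i = \Delta_{i,n+1}$, which gives
\[
n!\cdot \mv(\Delta_1,\dots,\Delta_n) \;=\; \sum_{J\subseteq[n]} (-1)^{n-|J|}\, \nvol\Bigl(\sum_{j\in J}\Delta_{j,n+1}\Bigr),
\]
and then evaluate the right-hand side. Each Minkowski sum $\sum_{j\in J}\Delta_{j,n+1}$ is itself a generalized permutohedron whose normalized volume admits a closed-form expression via Postnikov's volume formulas for Minkowski sums of hypersimplices (the specific subpermutohedron is determined by $J$), after which the alternating sum has to telescope to exactly $n!$. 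A cleaner parallel route I would also pursue is a direct Cayley-trick / mixed-subdivision argument: exhibit a fine mixed subdivision of $\Delta_{1,n+1}+\cdots+\Delta_{n,n+1}$ whose unique fully mixed cell (likely indexed by a distinguished permutation or transversal structure) has normalized volume $1/n!$, yielding $\mv(\Delta_1,\dots,\Delta_n) = 1$ immediately.

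The main obstacle is the combinatorial heart of the identity, namely showing that the alternating sum really equals $n!$, or equivalently that there is exactly one top-dimensional mixed cell contributing. A sanity check in the first nontrivial case is reassuring: for $n=2$ one has $\nvol(\Pi_2) - \nvol(\Delta_{1,3}) - \nvol(\Delta_{2,3}) = 3 - \tfrac{1}{2} - \tfrac{1}{2} = 2 = 2!$, matching the lemma on the nose. This confirms the reduction and suggests that the general inclusion–exclusion indeed telescopes cleanly, though verifying it for all $n$ is where the genuine work lies.
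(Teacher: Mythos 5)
The paper does not prove this lemma at all: it is cited directly from Postnikov's work (the reference \cite{bible}), where the coefficients of the homogeneous polynomial in Lemma \ref{lem:voledges} are introduced and named \emph{mixed Eulerian numbers}, and the evaluation $A_{1,1,\dots,1}=n!$ is one of Postnikov's established facts. The only thing the paper itself adds is the short remark immediately after the lemma, observing that the statement is equivalent to
\[
\ml^{n}(\Delta_{1,n+1},\Delta_{2,n+1},\dots,\Delta_{n,n+1})=1,
\]
which is exactly the reduction you perform (recall $\ml^n=\mv_\Lambda$ since $\Lat^n$ is the normalized volume valuation). So the first half of your proposal reproduces the paper's remark, and it is correct.

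The second half is where the gap sits, and you flag it yourself. Neither of your two proposed routes is carried out: the inclusion--exclusion via Theorem \ref{thm:mobius} requires knowing $\nvol$ of every partial Minkowski sum $\sum_{j\in J}\Delta_{j,n+1}$ and then showing the alternating sum telescopes to $n!$, while the Cayley-trick/mixed-subdivision route requires exhibiting and counting the fully mixed cells. Either of these is precisely the nontrivial content that Postnikov's theorem supplies, so as a self-contained proof your attempt is not complete. Your $n=2$ check ($3-\tfrac12-\tfrac12=2$) is right, but be aware that the easy monotonicity bound from choosing a segment $I_k\subset\Delta_{k,n+1}$ in each coordinate direction gives only $\mv(\Delta_{1,n+1},\dots,\Delta_{n,n+1})\ge\mv(I_1,\dots,I_n)=1/n!$, far short of the claimed value $1$; some genuinely combinatorial or geometric input (Postnikov's identity, or a concrete fine mixed subdivision of $\Delta_{1,n+1}+\cdots+\Delta_{n,n+1}$ with an explicit count of mixed cells) is unavoidable.
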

 Note that Lemma \ref{lem:squarefree} says that 
 $\sum_{\sigma \in \fS_n} \ml^n(\Delta_{\sigma(1), n+1}, \Delta_{\sigma(2),n+1}, \dots, \Delta_{\sigma(n),n+1}) = n!,$ which by Lemma \ref{lem:mixval}/(i) is equivalent to
	 \begin{equation} \ml^n(\Delta_{1,n+1},\Delta_{2,n+1},\dots, \Delta_{n,n+1}) = 1.
\label{equ:mixedhypersimplices}
\end{equation}

Recall in Section \ref{sec:generic}, we associate a face $F_\m$ of $\Perm(\v)$ to any composition $\m$ of $n+1,$ establish a bijection $\cS$ from $\m$ to subsets $S$ of $[n]$, and rewrite $F_\m$ as $F_S.$ We have the following result on the normalized volume of $F_S.$ 

\begin{prop}\label{prop:nvolumes}
  Suppose $P=\Perm(\v)$ and $S\subseteq [n]$. Let $F_S$ be the corresponding face of $P$ as defined in Section \ref{sec:generic}, and $\m=(m_1,\cdots, m_{l}) := \cS^{-1}(S)$ is the composition in bijection to $S$. Then $\nvol(F_S)$ is a homogeneous polynomial in $\{w_i \ : \ i \in S\},$ whose coefficient of $\prod_{i \in S} w_i$-- the unique squarefree monomial -- is 
  \begin{equation}\label{equ:CnS}
C_n(S) :=(m_1-1)!(m_2-1)!\cdots (m_{l}-1)!,
\end{equation}
\end{prop}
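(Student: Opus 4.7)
\medskip

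\noindent\textbf{Proof proposal.} The plan is to reduce the statement to Lemmas \ref{lem:voledges} and \ref{lem:squarefree} applied to the smaller permutohedra that arise as factors of $F_S$, and then to read off which edge variables $w_i$ these factors involve via the bijection $\cS$ of Lemma \ref{lem:bij}.

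First I would unpack the canonical representative. By Proposition \ref{prop:faceposet} together with the definition of $F_\m$ following Example \ref{ex:compositions2}, the face $F_S=F_\m$ is, as a subset of $\R^{n+1}$, literally the Cartesian product
\[
F_\m \;=\; \Perm(v_{a_1+1},\dots,v_{a_1+m_1})\times\Perm(v_{a_2+1},\dots,v_{a_2+m_2})\times\cdots\times\Perm(v_{a_l+1},\dots,v_{a_l+m_l}),
\]
where $a_k:=m_1+\cdots+m_{k-1}$; each factor lives in its own coordinate block $[a_k+1,\,a_k+m_k]$, and distinct blocks are supported on disjoint coordinate indices. Consequently the affine-span subspace $L_S$ of $F_\m$ (with the origin translated in) decomposes as an orthogonal direct sum $L_S=\bigoplus_{k=1}^l L_k$, where $L_k:=\operatorname{span}\{\e_i-\e_{i+1}:a_k< i< a_k+m_k\}$. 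Since these $L_k$ sit in disjoint coordinate subspaces, the lattice factorizes cleanly: $L_S\cap\Z^{n+1}=\bigoplus_k (L_k\cap\Z^{n+1})$. By the standard Fubini argument for canonical lattice measures on a direct sum, this gives
\[
\nvol(F_S) \;=\; \prod_{k=1}^{l}\nvol\bigl(\Perm(v_{a_k+1},\dots,v_{a_k+m_k})\bigr).
\]

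Second I would apply Lemmas \ref{lem:voledges} and \ref{lem:squarefree} to each factor. Each $\Perm(v_{a_k+1},\dots,v_{a_k+m_k})$ is a generic $(m_k{-}1)$-dimensional permutohedron on $m_k$ consecutive strictly increasing entries of $\v$; its edge-length parameters (in the sense of \eqref{equ:defnwi}) are exactly $w_{a_k+1},w_{a_k+2},\dots,w_{a_k+m_k-1}$. Hence its normalized volume is a homogeneous polynomial of degree $m_k-1$ in these $w_i$'s with strictly positive coefficients, and the coefficient of its unique squarefree monomial $w_{a_k+1}\cdots w_{a_k+m_k-1}$ equals $(m_k-1)!$.

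Third I would combine the factors using the bijection $\cS$ of Lemma \ref{lem:bij}. Tracing through that bijection, the complement of $S$ in $[n]$ is the set of breakpoints $\{a_2,a_3,\dots,a_l\}$, so
\[
S \;=\; \bigsqcup_{k=1}^{l}\{a_k+1,a_k+2,\dots,a_k+m_k-1\}.
\]
Therefore the variables appearing across the $l$ factor polynomials are exactly $\{w_i:i\in S\}$, with no repetitions between distinct factors. Multiplying the factor polynomials, the product is homogeneous of degree $|S|=\sum_k(m_k-1)$ in $\{w_i:i\in S\}$ with positive coefficients, and its unique squarefree monomial $\prod_{i\in S}w_i$ receives the coefficient $\prod_{k=1}^{l}(m_k-1)!=C_n(S)$, which is precisely \eqref{equ:CnS}.

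The only step that requires any real care is the volume factorization in paragraph one, but because the direct sum $L_S=\bigoplus_k L_k$ occurs across disjoint coordinate sets of $\R^{n+1}$, the lattice decomposition is automatic and the canonical measures multiply blockwise without any index correction; the rest is bookkeeping via $\cS$.
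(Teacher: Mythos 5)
Your argument is correct and follows essentially the same route as the paper: factor $F_\m$ as a product of smaller generic permutohedra via Proposition \ref{prop:faceposet}, apply Lemmas \ref{lem:voledges} and \ref{lem:squarefree} to each factor, and match up the index sets using the bijection from Lemma \ref{lem:bij} (the paper calls these index sets $T_i$). The only cosmetic difference is that you observe that for the canonical representative $F_\m$, with its consecutive blocks, the face is literally a coordinate-block Cartesian product rather than merely congruent to one, which makes the volume factorization and the disjointness of the variable sets immediate.
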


\begin{proof} Suppose $[n+1]\setminus S =\{t_1 < t_2 < \dots < t_{l-1} < t_l =n+1\},$ and by convention let $t_0=0.$ Note that by the proof of Lemma \ref{lem:bij}, we have $\sum_{k=1}^i m_k = t_i$ for each $i.$ 

  By Proposition \ref{prop:faceposet}, the face $F_S$ is congruent to
 $ 
\Perm(\v_{M_1})\times \Perm(\v_{M_2})\times \cdots \Perm(\v_{M_l}),
$
where $\v_{M_i} = \left(v_j \ : \ \displaystyle t_{i-1} < j \leq  t_i \right)$.
Hence, $\nvol(F_S) = \prod_{i=1}^l \nvol(\Perm(\v_{M_i})).$
Let 
\[ T_i := \left\{ j \ : \ t_{i-1} < j < t_i \right\}.\]
Then by Lemmas \ref{lem:voledges} and \ref{lem:squarefree}, the normalized volume of $\Perm(\v_{M_i})$ is a homogeneous polynomial in $\{ w_j \ : \ j \in T_i\},$ and the coefficient of $\prod_{j \in T_i} w_j$ -- the unique squarefree monomial -- in this homogeneous polynomial is $(m_i-1)!.$ Therefore, the conclusion follows from the observation that $S = \cup_{i=1}^l T_i.$ 
\end{proof}

The following is the main result of this section, which is an expanded version of Theorem \ref{thm:uniqueness0}.
\begin{thm}\label{thm:uniqueness}
  Suppose $\alpha$ and $\v$ are as in Setup \ref{setup1} and $C_n(S)$ is defined as in \eqref{equ:CnS}.  Then the $\alpha$ values of faces of $\Perm(\v)$ are uniquely determined. More specifically, if $S = \{s_1, s_2, \dots, s_k\},$ we have
  \begin{align}
\alpha_n(S) =& \ \frac{1}{C_n(S)|\O_n(S)|}k!\ml^{k}(\Delta_{s_1,n+1},\Delta_{s_2,n+1},\cdots,\Delta_{s_k,n+1}) \label{equ:alpha0}  \\
=& \ \frac{m_1\cdot m_2 \cdots  m_{l}}{(n+1)!} k!\ml^{k}(\Delta_{s_1,n+1},\Delta_{s_2,n+1},\cdots,\Delta_{s_k,n+1}). \label{equ:alpha}
\end{align}

In particular the above formula applies to the BV-$\alpha$-valuation.
\end{thm}

One sees that the above theorem gives a connection between the $\alpha$ arising from the regular permutohedron and the mixed lattice point valuation $\ml^k$ on hypersimplices. Therefore, we have the following:
\begin{cor}\label{cor:equimixed}
  The following two statements are equivalent:
\begin{enumerate}
  \item For any $S =\{s_1, \dots, s_k\} \subseteq [n],$ we have
  $\ds \ml^{k}(\Delta_{s_1,n+1},\Delta_{s_2,n+1},\cdots,\Delta_{s_k,n+1}) > 0.$
\item The regular permutohedron $\Pi_n$ is BV-$\alpha$-positive.
\end{enumerate} 
  \end{cor}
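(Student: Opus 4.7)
The plan is to derive this corollary as an immediate consequence of Theorem \ref{thm:uniqueness} applied to the BV-$\alpha$-valuation, once we observe that the multiplicative prefactor in the uniqueness formula is strictly positive.

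First, I would check that Setup \ref{setup1} applies. By Lemma \ref{lem:symmetric}, the BV-$\alpha$-valuation is symmetric about the coordinates, and the regular permutohedron $\Pi_n = \Perm(1,2,\dots,n+1)$ has strictly increasing coordinates, so Setup \ref{setup1} is satisfied with $\alpha$ being the BV construction. Consequently $\alpha(F,\Pi_n)$ is constant on each orbit $\mathcal{O}_n(S)$, equal to $\alpha_n(S)$. Hence BV-$\alpha$-positivity of $\Pi_n$ is equivalent to the assertion that $\alpha_n(S) > 0$ for every $S \subseteq [n]$.

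Second, I would invoke Theorem \ref{thm:uniqueness}, which gives
\[
\alpha_n(S) \;=\; \frac{k!}{C_n(S)\,|\mathcal{O}_n(S)|}\; \ml^{k}\bigl(\Delta_{s_1,n+1},\Delta_{s_2,n+1},\dots,\Delta_{s_k,n+1}\bigr),
\]
where $S=\{s_1,\dots,s_k\}$. The scalar $\frac{k!}{C_n(S)|\mathcal{O}_n(S)|}$ is a product of strictly positive quantities: $k! > 0$, $C_n(S) = \prod_{i=1}^\ell (m_i-1)! > 0$ by its definition in \eqref{equ:CnS}, and $|\mathcal{O}_n(S)| = \binom{n+1}{m_1,\dots,m_\ell} > 0$ by \eqref{eqn:orbits}. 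Therefore $\alpha_n(S)$ and $\ml^{k}(\Delta_{s_1,n+1},\dots,\Delta_{s_k,n+1})$ share the same sign for every $S$.

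Putting these two observations together yields the equivalence: $\alpha_n(S) > 0$ for all $S$ if and only if $\ml^{k}(\Delta_{s_1,n+1},\dots,\Delta_{s_k,n+1}) > 0$ for all $S = \{s_1,\dots,s_k\}$. The only minor point to be careful about is the degenerate case $S=\emptyset$ (the vertex case), where $k=0$; here both sides reduce to $\alpha_n(\emptyset) = \ml^0$ evaluated on an empty argument list, which by the convention in Theorem \ref{thm:mixedLat} is the degree-$0$ part $\Lat^0$, known to be positive, so this case adds no obstruction. Since the proof is a direct application of an already-established theorem, there is no real obstacle: the only work is confirming the positivity of the prefactor, which is immediate from the combinatorial descriptions of $C_n(S)$ and $|\mathcal{O}_n(S)|$.
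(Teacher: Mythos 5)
Your proof is correct and takes exactly the route the paper intends: the paper gives no explicit proof because it regards the corollary as immediate from Theorem \ref{thm:uniqueness}, and your argument supplies precisely the verification one would want (Setup \ref{setup1} applies via Lemma \ref{lem:symmetric}, and the prefactor $\frac{k!}{C_n(S)\,|\mathcal{O}_n(S)|}$ is strictly positive, so $\alpha_n(S)$ and $\ml^k(\Delta_{s_1,n+1},\dots,\Delta_{s_k,n+1})$ have the same sign). Your remark on the degenerate case $S=\emptyset$ is also accurate and harmless.
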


\begin{proof}[Proof of Theorem \ref{thm:uniqueness}]
  Let $w_i$ be defined as in \eqref{equ:defnwi}.
Theorem \ref{thm:mixedLat} or Theorems \ref{thm:mixed} and \ref{thm:decomposeLat} tell us that the number of lattice points in
\[\Perm(\v) =  w_1\Delta_{1,n+1} + w_2\Delta_{2,n+1} + \cdots + w_{n}\Delta_{n,n+1}\] 
is a polynomial in the $w_i$ variables.
We denote this polynomial by $E = E(w_1, w_2,\dots, w_n).$ We focus on the coefficient of squarefree monomials $w_S :=\prod_{i\in S}w_i$ in $E.$ On the one hand, by \eqref{equ:mixed} and Lemma \ref{lem:mixval}/(i), this coefficient is equal to
\begin{equation}\label{eqn:coeff1}
  k!\ml^{k}(\Delta_{s_1,n+1},\Delta_{s_2,n+1},\cdots, \Delta_{s_k,n+1}).
\end{equation}
Next by Equation \eqref{eqn:subsets}, we have
\[
E(w_1, \dots, w_n) = \displaystyle \sum_{S\subset[n]} |\O_n(S)| \cdot \alpha_n(S) \cdot \nvol(F_S).
\]
(Proposition \ref{prop:nvolumes} guarantees that the right hand side of the above expression is indeed polynomial on the $w_i$ variables.)
Note that according to Proposition \ref{prop:nvolumes}, the only contribution to the monomial $w_S=\prod_{i\in S}w_i$ in the summand above is the term corresponding to $S$, and it is given by $C_n(S)$. Using these, we conclude that the coefficient of $w_S$ in $E(w_1,\dots, w_n)$ is
\begin{equation}\label{eqn:coeff2}
\alpha_n(S) \cdot |\O_n(S)| \cdot C_n(S)
\end{equation}
Our two expressions, \eqref{eqn:coeff1} and \eqref{eqn:coeff2}, for the coefficient of $w_S$ in $E$ have to agree. Hence, \eqref{equ:alpha0} follows. 

Finally, \eqref{equ:alpha} follows from \eqref{equ:CnS}, \eqref{eqn:orbits}, and the bijection defined in Lemma \ref{lem:bij}.
\end{proof}


Formula \eqref{equ:alpha} allows us to compute some examples of $\alpha_n(S)$.


\begin{ex}\label{ex:compLat^2}
	Consider $n=3$ and $S =\{1,3\} \subseteq [3].$ The corresponding composition is $\m = (2,2)$. Applying \eqref{equ:alpha}, we get 
	\[ \alpha_3(\{1,3\}) = \frac{2\cdot 2}{24} \ 2! \ml^2(\Delta_{1,4}, \Delta_{3,4}).\] 
	Furthermore, Theorem \ref{thm:mobius} provides a way to compute mixed valuations:
	\[ 2!\ml^2(\Delta_{1,4},\Delta_{3,4}) = \Lat^2(\Delta_{1,4}+\Delta_{3,4})-\Lat^2(\Delta_{1,4}) - \Lat^2(\Delta_{1,4}).\]
	By the comment after Theorem \ref{thm:decomposeLat}, for any polytope $\Lat^r(P)$ is the coefficient of $t^r$ in the Ehrhart polynomial $i(P,t).$ Hence, we can figure out the terms in the right hand side of the above equality by computing corresponding Ehrhart polynomials:
\begin{eqnarray*}
i\left(\Delta_{14}+\Delta_{34},t\right) &=& \frac{10}{3}t^3 + 5t^2 + \frac{11}{3}t + 1,   \\ 
i\left(\Delta_{14},t\right) &=& \frac{1}{6}t^3 + t^2 + \frac{11}{6}t +1, \\
i\left(\Delta_{34},t\right) &=& \frac{1}{6}t^3 + t^2 + \frac{11}{6}t +1.
\end{eqnarray*} 
Therefore,
$2!\ml^2(\Delta_{1,4},\Delta_{3,4}) =  5 - 1  - 1 =3,$ 
and
$\alpha_3(\{1,3\}) = \ds \frac{2 \cdot 2}{24} \cdot 3 = \frac{1}{2},$
which agrees with Lemma \ref{lem:01dim} since $F_{\{1,3\}}$ is a facet. 
\end{ex}


\section{Further Questions and Remarks}\label{sec:further}

We finish the article with a brief description of other progress we've made on proving our conjectures and a discussion on questions/problems arising from this paper.

\subsection*{Other results} In addition to the results presented in this paper, we have two other related results, which are omitted because they are less important than those appeared in the paper, and we want to keep the paper within a reasonable length.

The first one was another partial result on our strong conjecture (Conjecture \ref{conj:alphas}).
Recall in Example \ref{ex:3d} we found the $\alpha$-values of two kinds of edges of $\Pi_3$ by setting up a triangular linear system. 
Using similar strategy, we can set up an explicit linear system for $\alpha$-values of edges of $\Pi_n$ for any $n.$ Since solving linear systems is very fast, we can find $\alpha$-values of edges of $\Pi_n$ quickly for $n$ that is not too large. For example, we computed $\alpha$-values of edges of $\Pi_{500}$ and verified that they were all positive. By Remark \ref{rem:upton}, this implies that our strong conjecture is true for edges of generalized permutohedra of dimension up to $500$. (See \cite{thesis} for details.) 
Even though it will be easy for us to push the number $500$ to a much larger number by solving linear systems explicitly, a systematic way to show all solutions are positive for all $n$ will be more desirable.

The second result is another equivalent statement to Conjecture \ref{conj:alphas} in addition to the equivalent statement in terms of mixed lattice point valuations provided in Corollary \ref{cor:equimixed}. 
The Berline-Vergne's construction gives one way to write the Todd class of the permutohedral variety in terms of the toric invariant cycles. We can show that if there is \textbf{any} way of writing such class as a positive combination of such cycles, then the BV-$\alpha$-valuation is one of them. (See \cite[Proposition 7.2]{casliu-BValpha-v1} or \cite{thesis}.)
This is important since there are other constructions that may work. For instance, if there is an appropriate choice of flags in Pommersheim-Thomas method that yields positive values, then it will prove Conjecture \ref{conj:alphas}. 

\subsection*{Questions}
Naturally, the main question left is still whether Conjecture \ref{conj:alphas}, or any of its equivalences, is true. Other than that, the following questions may be of further interest.
\begin{enumerate}
\item \textbf{Uniqueness of the BV-$\alpha$-valuation.}
\begin{itemize}
  \item Can we generalize Theorem \ref{thm:uniqueness} to other families of polytopes that come from a certain normal fan?
    In other words, is there any other normal fan $\Sigma,$ such that the BV-$\alpha$-values arising from polytopes whose normal fan is $\Sigma$ are uniquely determined?
\item More importantly, is the BV-$\alpha$-valuation the unique solution to McMullen's formula that is a valuation and is invariant under permutations of coordinates?
\end{itemize}
\item \textbf{Compute the BV-$\alpha$-valuation for some specific polytopes.} For the case of generalized permutohedra, we believe that knowledges of $\alpha$-values on hypersimplices would be a very useful step in understanding the general case.
We note that there seems to be very few examples explicitly computed in the literature.
\end{enumerate}
It is worth mentioning that in a recent paper \cite{gaku}, the author describes a recursive way to compute mixed Eulerian numbers, which are mixed volumes. The method extends to a recursive way of computing the mixed lattice point valuations of hypersimplices, but so far we cannot prove positivity that way. 

Lastly, we would like to mention a related idea. 
The \emph{exterior angle} $\gamma(F,P)$ of $P$ at $F$ is the ``intrinsic measure'' of $\ncone_V(F,P)$. 
It is clear from the definition of exterior angles, $\gamma$ is symmetric about the coordinates. 
Moreover, the following result on $\gamma(F,P)$ indicates that $\gamma$ is a partial solution to McMullen's formula.
\begin{thm}[Corollary 7.8 of \cite{BarviPom}]
Let $P\subset \R^D$ be an integral zonotope, i.e. a Minkowski sum of elements in $\Z^D$, then we have
\[
\Lat(P) = \displaystyle \sum_{F: \textrm{ a face of $P$}} \gamma(F,P)
\ \nvol(F)
\]
\end{thm} 
The above theorem applies to the regular permutohedron; however, unfortunately it does \textbf{not} apply to its deformations, not even for generic permutohedra. So this construction $\gamma$ doesn't satisfy Setup \ref{setup1}. 
Our arguments in Section \ref{sec:unique} depend on the fact that McMullen's formula holds for all generic permutohedra, and thus won't hold for the exterior angle construction. Indeed, the exterior angles of the edges of $\Pi_3$ are not even rational numbers, and hence are different from the formulas we derived in Theorem \ref{thm:uniqueness}.

\bibliography{biblio}
\bibliographystyle{plain}

\end{document}